\newtheoremstyle{proclaim}
  {3pt}        
  {3pt}        
  {\slshape}  
  {}        
  {\bfseries} 
  {.}       
  { }    
  {}        
\theoremstyle{proclaim}
\newtheorem{lemma}{Lemma}
\newtheorem{proposition}{Proposition}
\newtheorem{theorem}{Theorem}
\theoremstyle{definition}
\newtheorem{example}{Example}
\theoremstyle{remark}
\newtheorem*{remark*}{Remark}
\newtheorem{remark}{Remark}
\newcommand{\blank}{\kern.15em\cdot\kern.15em}
\newcommand*{\transpose}{{\mathpalette\@transpose{}}}
\newcommand*{\@transpose}[2]{\raisebox{\depth}{$\m@th#1\kern0.0556em\intercal$}}
\DeclareMathOperator{\Indicator}{\mathbbm{1}}
\newcommand{\awDistance}{d\kern-.12em l} 
\newcommand{\gtrasymp}{\mathrel{\raisebox{2.25pt}{$>$}\kern-0.75em\raisebox{-3.25pt}{$\smallfrown$}}}
\newcommand{\qqquad}{\quad\quad\quad}
\newcommand{\tspace}{\kern0.0556em} 
\newcommand{\ttspace}{\tspace\tspace}
\newcommand{\ntspace}{\kern-0.0556em}
\newcommand{\reals}{\mathbb{R}}
\DeclareMathAlphabet{\mathsl}{T1}{cmr}{m}{sl}
\newcommand{\drv}{\mathsl{d}}
\newcommand{\defeq}{\coloneq}
\DeclareMathOperator*{\minimize}{minimize}
\DeclareMathOperator*{\maximize}{maximize}
\newcommand{\subjectTo}{\text{subject to}}
\newcommand{\where}{\text{where}}
\newcommand{\rxi}{\bm{\xi}}
\newcommand{\Expt}{\mathbb{E}} 
\newcommand{\Prob}{\mathbb{P}} 
\newcommand{\pmProb}{\mathbbm{1}} 
\newcommand{\given}{\kern.1em\vert\kern.1em}
\newcommand{\bigGiven}{\kern-.15em\bigm\vert\kern-.15em}
\newcommand{\BigGiven}{\kern-.15em\Bigm\vert\kern-.15em}
\newcommand{\biggGiven}{\kern-.15em\biggm\vert\kern-.15em}
\newcommand{\BiggGiven}{\kern-.15em\Biggm\vert\kern-.15em}
\definecolor{myblue}{RGB}{0, 119, 180}
\renewcommand{\epsilon}{\varepsilon}
\newcommand{\cC}{{\cal C}}
\newcommand{\cE}{{\cal E}}
\newcommand{\cF}{{\cal F}}
\newcommand{\cI}{{\cal I}}
\newcommand{\cN}{{\cal N}}
\newcommand{\cP}{{\cal P}}
\newcommand{\cQ}{{\cal Q}}
\newcommand{\cT}{{\cal T}}
\let\Indicator\relax
\DeclareMathOperator{\Indicator}{\mathbb{I}}
\newcommand{\costUnder}{c_{\mathrm{u}}}
\newcommand{\costOver}{c_{\mathrm{o}}}
\renewcommand{\rxi}{\xi}
\newcommand{\liftedProb}{\bm{\mathrm{P}}}
\newcommand{\liftedQrob}{\bm{\mathrm{Q}}}
\newcommand{\repsilon}{\varepsilon}
\newcommand{\pathConstant}{\mu}
\DeclareMathOperator{\logRange}{\texttt{LogRange}}
\DeclareMathOperator{\linRange}{\texttt{LinRange}}
\def\@maketitle{%
  \newpage
  \begin{center}%
  \let \footnote \thanks
    {\LARGE \@title \par}%
    \vskip 1.5em%
    {\large
      \lineskip .5em%
      \begin{tabular}[t]{c}%
        \@author
      \end{tabular}\par}%
  \end{center}%
  }
\title{\textbf{Nonstationary Distribution Estimation\\via {Wasserstein} Probability Flows}}
\author{\textsl{Edward J.\ Anderson} \\ {\small Imperial College London}\\ {\small \texttt{e.anderson@imperial.ac.uk}}\\ \and \textsl{Dominic S.\ T.\ Keehan} \\ {\small University of Auckland} \\ {\small \texttt{dkee331@aucklanduni.ac.nz}}}
\date{\small 8\textsuperscript{th} of July, 2025}
\begin{document}

\maketitle

\noindent \textbf{Abstract:} We study the problem of estimating a sequence of evolving probability distributions from historical data, where the underlying distribution changes over time in a nonstationary and nonparametric manner. To capture gradual changes, we introduce a model that penalises large deviations between consecutive distributions using the Wasserstein distance.  This leads to a method in which we estimate the underlying series of distributions by maximizing the log-likelihood of the observations with a penalty applied to the sum of the Wasserstein distances between consecutive distributions. We show how this can be reduced to a simple network-flow problem enabling efficient computation. We call this the Wasserstein Probability Flow method. We derive some properties of the optimal solutions and carry out numerical tests in different settings. Our results show that the Wasserstein Probability Flow method is a promising tool for applications in nonstationary stochastic optimization.

\bigskip
\noindent \textbf{Keywords:} Nonstationary distributions, Wasserstein distance, maximum-likelihood estimation.

\section{Introduction}
In many real-world settings, observed data are generated from an underlying probability distribution that evolves gradually over time. When these changes are smooth rather than abrupt, it remains possible to infer meaningful information about the current distribution from historical observations. In this paper, we address the problem of estimating such an evolving distribution -- particularly its value at the most recent time period -- either for the purposes of forecasting, or as an input to a downstream stochastic optimization problem.

There is a substantial body of literature addressing stochastic optimization problems under nonstationary distributions. Nonstationarity may arise in different ways: one common approach models the underlying distribution as changing a limited number of times. A typical framework allows at most $O(\log(T))$ changes among $T$ historical observations, so that as $T$ gets larger, there are more opportunities to learn the current distribution before another change occurs \parencite{chen2021data}. 

Nonstationary versions of the classical newsvendor problem have been studied extensively. One class of models permits changes in the distribution's mean, constrained by a fixed budget of variation. Variation is measured by taking the maximum over any time-epoch subsequence of the sum of squared distances between consecutive means (akin to a sum of squared peak-to-trough distances) \parencite{keskin2023nonstationary, an2025nonstationary}. Similar formulations are also used for pricing problems where sellers face nonstationary demand \parencite{keskin2017chasing}.

An alternative approach to modelling nonstationarity assumes that the distribution is governed by an underlying state which evolves according to a Markov chain. This gives rise to partially observable Markov decision processes, as studied in the inventory setting of \parencite{treharne2002adaptive}. Nonstationarity in the transition dynamics themselves can also be captured by imposing variation bounds on the transition probabilities, as in \parencite{cheung2023nonstationary}.  
 
The fundamental problem of estimating a nonstationary distribution is often approached parametrically. For example, when the underlying distribution is known to be normal and its mean evolves over time according to a Gaussian process, the Kalman filter provides a natural estimation framework. Indeed, the Kalman filter may also be used as an estimation procedure without strict normality assumptions \parencite{humpherys2012fresh}. But this approach uses a latent state with noisy dynamics and measurements, whereas our interest is in estimating the distribution itself directly from the observed data.

This problem can also be viewed from the perspective of time-series forecasting, where the distribution of the next period's observation is effectively estimated from the forecast mean and variance (a point forecast and associated confidence interval). This is a nonstationary estimation problem where both the mean and variance may change. Many forecasting models incorporate error terms that exhibit heteroscedasticity -- 
this is the basis of the celebrated ARCH and GARCH models, which have proven highly effective in modelling financial time series. But these models differ from ours in that the error term with changing variance relates to the innovation. There are many versions of these models   \parencite{bollerslev1992arch}; typically the time-series value is given by an autoregressive term $y_t=a_0+\sum_{s=1}^{t-1}a_s y_{s}+ \varepsilon_t$
in which $\varepsilon_t$ is drawn from a distribution with zero mean and variance $\sigma_t^2$, and $\sigma_t^2$ depends on the previous values of both $\varepsilon_t$ and $\sigma_t^2$. In an ARCH model the $y_t$ value is the actual value at time $t$, whereas our model combines a random innovation term in the underlying mean value $y_t$ with a measurement error, giving an observation~$\xi_t=y_t+\eta_t$, where the distribution of the measurement error $\eta_t$ can vary over time~as~well.

Two simple and widely used approaches for dealing with evolving data are the \emph{rolling window}, where estimation is based solely on the most recent observations, and its generalisation, \emph{weighted estimation}, where more recent observations are given higher weight \parencite{keskin2023nonstationary}. These approaches can also be adjusted dynamically to reflect the stability of the observations \parencite{huang2023stability}.

When varying weights are applied to observations, a particularly appealing choice is exponential decay, as used in simple exponential smoothing for time-series forecasting. Exponential smoothing is optimal in a variety of settings; see \textcite{chatfield2001new} for a comprehensive review. These include quite general cases. For example, simple exponential smoothing provides the optimal forecast when the observation $\xi_t$ is given by an underlying level $y_t$ plus a noise $\varepsilon_t$, and the $y_t$ are subject to random innovations $\delta_t$ from one period to the next,  with a correlation between~$\varepsilon_t$~and~$\delta_t$, and successive values of both the noise and innovation each independent and identically distributed.  

In standard forecasting applications, exponential smoothing generates a single-point forecast. In contrast, our approach applies exponentially decaying weights to the empirical distribution of observations, producing a new (still discrete) weighted empirical distribution, which serves as an estimate of the evolving underlying distribution. For example, in a nonstationary newsvendor setting we choose an order quantity based on the appropriate quantile of this unequally weighted empirical distribution. Similar ideas are explored in \cite{boudoukh1998best,amrani2017estimation}.
 
Our approach to nonstationary distribution estimation is fully nonparametric. To model changes in the distribution over time, we introduce a measure of temporal variation using the Wasserstein distance.  Specifically, starting from a given distribution, we assume that the likelihood of transitioning to a new distribution is determined by the Wasserstein distance to that distribution, with the probability decreasing as the distance increases. 

In the case of independent samples drawn from a \emph{fixed} distribution, the maximum likelihood estimator is the empirical distribution that assigns equal weight to each sample. This gives the widely used sample average approximation (SAA) method in data-driven stochastic optimization, where each historical observation is treated equally. Thus, SAA can be interpreted as a maximum likelihood procedure under the assumption of distributional stationarity.

The SAA approach is nonparametric and is effective even in circumstances when the true underlying distribution is known to be continuous rather than discrete. One might imagine that additional knowledge about the true distribution (for example Lipschitzian properties of the density) could be used to improve the stochastic optimization procedure, but SAA performs very well even in comparison to much more complex approaches~\parencite{anderson2020can}. 

When the underlying distribution evolves over time, we propose an extension of the maximum-likelihood framework that penalises large changes in the estimated distributions between consecutive time periods. 
As this shift penalty approaches infinity, the method reduces to standard SAA, lending support to its effectiveness even in more general settings where additional distributional structure may be known.

The shift penalty we propose is proportional to the Wasserstein distance between consecutive distributions. Thus, for historical observations $\widehat{\xi}_1,\ldots,\widehat{\xi}_T$, we estimate  distributions~$\mathbb{P}_1,\ldots,\mathbb{P}_T$ by solving
\begin{equation*}
\maximize_{\mathbb{P}_1,\ldots,\mathbb{P}_T} ~~ \sum_{t=1}^{T} \log\bigl(\mathbb{P}_t(\{\widehat{\xi}_t\})\bigr) - \lambda \sum_{t=1}^{T-1} W(\mathbb{P}_t, \mathbb{P}_{t+1}),
\end{equation*}
where $W$ denotes the Wasserstein distance and $\lambda$ is a regularization parameter that controls the trade-off between goodness-of-fit and temporal smoothness.

An alternative formulation would solve the problem with a fixed budget for the sum of the Wasserstein distances between consecutive distributions, and this is closer to some previous work on nonstationarity. But dualising such a budget constraint leads to a penalty in the objective function of the form presented above. Yet another formulation has a fixed limit on the Wasserstein distance between successive distributions, and (in a different context) this is the approach taken in \parencite{keehan2025dontlookangerwasserstein}.

The first-order Wasserstein distance is used to compare consecutive distributions and, because of the triangle inequality, this gives rise to a natural interpretation in terms of flow through a network. This insight forms the basis of our proposed method, which we term the \emph{Wasserstein Probability Flow} (WPF). Unlike other measures of divergence -- such as the Kullback--Leibler divergence, the Wasserstein distance leads to a tractable optimization problem that can be efficiently solved using standard network-flow techniques. Moreover, it also incorporates geometric information contained in the distances between each of the 
observations (which is 
neglected by other divergence-based approaches), and gives the user flexibility to vary the metric to suit a particular application.

We make three contributions in this paper:
\begin{enumerate}[label={(\roman*)}]
\item We introduce the WPF method for estimating probability distributions from nonstationary time series. To the best of our knowledge, this approach is novel and has not appeared in the existing literature.
\item We establish key properties of the WPF solution. In particular, we show that the resulting distribution is uniquely determined under mild conditions and (effectively) supported only on the observed data points. 
We also derive bounds on the probability mass assigned to certain subsets of observations and prove that the probabilities assigned to individual observations are weakly decreasing in their age.  
\item We demonstrate through numerical experiments that the WPF method performs well on both synthetic and real-world data sets.
\end{enumerate}

The remainder of the paper is organised as follows. In Section~\ref{section:model} we present the model underlying the WPF method and show that in each time period the distribution we find at optimality is discrete. Section~\ref{section:network-flow} gives the network-flow formulation. 
In Section~\ref{section:analysis-of-components} we derive some analytical properties of optimal solutions, and then in Section~\ref{section:numerical-results} we report on  numerical tests of the performance of the WPF method. Section~\ref{section:conclusion} concludes with a discussion. All source code and results are available in the repository accompanying this work.%
\footnote{Link: \url{https://github.com/dominickeehan/wasserstein-probability-flows}.}

\section{Model} \label{section:model}
\noindent In this section we give details on our model setup and introduce the WPF method. Let $\Xi$ be a closed subset of $\mathbb{R}^m$. We write $\mathfrak{P}(\Xi)$ for the set of probability distributions on $\Xi$ and $\pmProb_{\xi}$ for the point-mass distribution that assigns probability $1$ to $\xi \in\Xi$. 
All of the distributions considered in this paper are Borel probability measures. For a Polish metric space $(\Xi,d)$, the \emph{Wasserstein distance} between two distributions $\mathbb{P},\mathbb{Q}\in\mathfrak{P}(\Xi)$ is
\begin{equation*}
    W(\mathbb{P},\mathbb{Q}) \defeq \inf_{\gamma \in \Gamma(\mathbb{P},\mathbb{Q})}\int_{\Xi\times\Xi} d(\xi,\zeta) \,\drv\gamma(\xi,\zeta),
\end{equation*}
where $\Gamma(\mathbb{P},\mathbb{Q})$ is the set of all couplings of $\mathbb{P}$ and $\mathbb{Q}$; that is, the set of all probability distributions 
on $\Xi\times\Xi$ with first marginal $\mathbb{P}$ and second marginal $\mathbb{Q}$. We assume throughout the paper that the distance $d$ is lower semicontinuous which implies that the infimum is attained \parencite[Theorem~4.1]{Optimal-Transport:Villani}. 

Suppose that there is a sequence of underlying distributions $\{\mathbb{P}_t \in \mathfrak{P}(\Xi)\}_{t\in[T]}$ generated from some unknown stochastic process. For some $\lambda \geq 0$ we set the likelihood of a shift from $\mathbb{P}_t$~to~$\mathbb{P}_{t+1}$ proportional to
\begin{equation*}
\exp\bigl(-\lambda W(\mathbb{P}_t, \mathbb{P}_{t+1})\bigr).
\end{equation*}
Under the assumption that shifts between periods are independent of the history of previous distributions, the likelihood of realising the sequence $\{\mathbb{P}_t\}_{t\in[T]}$ given $\mathbb{P}_1$ is proportional to
\begin{equation*}
    \prod_{t=1}^{T-1} \exp\bigl(-\lambda W(\mathbb{P}_t, \mathbb{P}_{t+1})\bigr).
\end{equation*}
Our discussion here is informal. We deal with likelihoods and so the relative probabilities are the key quantities. But to do this in a formal sense would require the equivalent of a density on the infinite-dimensional space of $T$-tuples of distributions on $\Xi$, and this is beyond the scope of this paper.

Randomly sampling an observation from each of the distributions $\mathbb{P}_1,\ldots,\mathbb{P}_T$, the likelihood of observing the outcomes $\xi_1,\ldots,\xi_T\in\Xi$ is the product $\prod_{t=1}^{T} {\mathbb{P}_t}(\{\xi_t\})$. Hence, the likelihood of realising the sequence $\{\mathbb{P}_t\}_{t\in[T]}$ given $\{\xi_t\}_{t\in[T]}$ is proportional to
\begin{equation*}
    \prod_{t=1}^{T} {\mathbb{P}_t}(\{\xi_t\}) \cdot \prod_{t=1}^{T-1} \exp\bigl(-\lambda W(\mathbb{P}_t, \mathbb{P}_{t+1})\bigr).
\end{equation*}
Thus, for 
observations $\widehat{\xi}_1,\ldots,\widehat{\xi}_T\in\Xi$, we solve the problem
\begin{alignat}{2}\label{problem:WPF}
    & \kern-0.07em \maximize_{\mathbb{P}_1,\ldots,\mathbb{P}_T\in\mathfrak{P}(\Xi)} && ~~ \sum_{t=1}^{T} \log\bigl(\mathbb{P}_t(\{\widehat{\xi}_t\})\bigr) - \lambda  \sum_{t=1}^{T-1} W(\mathbb{P}_t, \mathbb{P}_{t+1}) \tag{WPF} 
\end{alignat}
to obtain a maximum-likelihood estimate for the sequence of underlying distributions. This has the natural interpretation of simply maximizing log-likelihood with a penalty on changes in the distribution between time periods. Note that we may have $\mathbb{P}_t(\{\widehat{\xi}_t\}) = 0$, so in order for (\ref{problem:WPF}) to be well defined, throughout the paper we set $\log(0) \defeq -\infty$ and $\exp(-\infty) \defeq 0$. 

We also consider a version of the problem with more than one observation per time period. Suppose observation $\widehat{\xi}_t$ is made in period $s(t)\in[S]$, with $s(t)$ increasing in $t$ and at least one observation in each of the $S$ periods. Here $s(1)=1, \ldots, s(T)=S$, so for this version of the problem we simply replace $\mathbb{P}_t$ with $\mathbb{P}_{s(t)}$ in (\ref{problem:WPF}) to give the problem
\begin{alignat}{2}\label{problem:WPF-G}
    & \kern-0.07em \maximize_{\mathbb{P}_1,\ldots,\mathbb{P}_S\in\mathfrak{P}(\Xi)} && ~~ \sum_{t=1}^{T} \log\bigl(\mathbb{P}_{s(t)}(\{\widehat{\xi}_t\})\bigr) - \lambda  \sum_{t=1}^{T-1} W(\mathbb{P}_{s(t)}, \mathbb{P}_{s(t+1)}) \tag{TG-WPF}.
\end{alignat}
We call this the time-grouped version of WPF. Observe that this is the original problem with the additional constraint $\mathbb{P}_{t}=\mathbb{P}_{t+1}$ imposed whenever $s(t)=s(t+1)$.

\begin{remark}[Extreme Values of $\lambda$]
Large values of $\lambda$ correspond to a large penalty on changes in the distribution between periods. If $\lambda = \infty$, then every distribution in the optimal solution to (\ref{problem:WPF}) is the empirical distribution on the observations. On the other hand, if there is no penalty on changes in the distribution between periods, so $\lambda = 0$, then the sequence of point-mass distributions on each observation is optimal.
\end{remark}

Through the definition of the Wasserstein distance, (\ref{problem:WPF}) has minimizations in its objective function. These terms are multiplied by $-\lambda \leq 0$, and thus we include them in the overall maximization. For a feasible solution to (\ref{problem:WPF}) with distributions $\Prob_1,\ldots,\Prob_T$, 
since the infimum in the definition of the Wasserstein distance is attained (as assumed throughout), there exist consecutive minimal-cost transportation plans $\gamma_1,\ldots,\gamma_{T-1}\in\mathfrak{P}(\Xi^2)$ between  $\Prob_1,\ldots,\Prob_T$. Applying the gluing lemma \parencite[page~23]{Optimal-Transport:Villani}, it follows that a distribution $\liftedProb\in\mathfrak{P}(\Xi^T)$ with associated marginals $\Prob_1,\ldots,\Prob_T$ and joint marginals $\gamma_1,\ldots,\gamma_{T-1}$ exists. Let $\Indicator$ denote the event indicator; that is $\Indicator \{\cE\} =1$ if the event $\cE$ occurs and $\Indicator \{\cE\}=0$ if it does not. Expressing $\log \bigl(\Prob_t(\{\widehat{\xi}_t\})\bigr)$ as $\log\bigl(\int_{\Xi^T} \Indicator\{\xi_t = \widehat{\xi}_t\}  \,\drv\liftedProb(\xi_1,\ldots,\xi_T)\bigr)$ and $W(\Prob_t,\Prob_{t+1})$ as $\int_{\Xi^T} d(\xi_t, \xi_{t+1}) \,\drv\liftedProb(\xi_1,\ldots,\xi_T)$ enables us to reformulate (\ref{problem:WPF}) as
\begin{alignat}{2}\label{problem:lifted-discrete-support}
    & \kern-0.07em \maximize_{\liftedProb\in\mathfrak{P}(\Xi^T)} && ~~ \sum_{t=1}^{T} \log \biggl( \int_{\Xi^T} \Indicator\{\xi_t = \widehat{\xi}_t\}  \,\drv\liftedProb(\xi_1,\ldots,\xi_T) \biggr) - \lambda  \sum_{t=1}^{T-1} \int_{\Xi^T} d(\xi_t, \xi_{t+1}) \,\drv\liftedProb(\xi_1,\ldots,\xi_T).
\end{alignat}
Here the distributions $\Prob_t$ in the original formulation are simply the marginals of $\liftedProb$.

\subsection*{Finite-Dimensional Reduction}
Optimizing over arbitrary probability distributions in $\mathfrak{P}(\Xi)$ is an infinite-dimensional problem.  
Our first result relies on the lower semicontinuity of the distance $d$ through the reformulation (\ref{problem:lifted-discrete-support}), and shows that no suboptimality is imposed by instead optimizing over distributions supported only at a finite number of points. Let $\mathfrak{P}_{\nu}(\Xi)\subseteq \mathfrak{P}(\Xi)$ denote the set of discrete probability distributions 
supported on at most $\nu$ points within $\Xi$.
\begin{lemma}\label{lemma:discrete-support}
    For any feasible solution to (\ref{problem:WPF}), there exists another feasible solution consisting of discrete distributions each supported on at most $T+1$ points in \(\Xi\), with the same objective~value. 
\end{lemma}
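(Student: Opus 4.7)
The plan is to pass to the lifted reformulation~(\ref{problem:lifted-discrete-support}) and apply a Carath\'eodory-style extreme-point argument on the joint distribution. Starting from a feasible $(\mathbb{P}_t)_{t=1}^T$, invoke the Gluing Lemma (as already used in the text preceding (\ref{problem:lifted-discrete-support})) to construct $\liftedProb_0 \in \mathfrak{P}(\Xi^T)$ whose $t$-th marginal is $\mathbb{P}_t$ and whose consecutive-pair marginals are \emph{optimal} couplings, so that
\[
L(\liftedProb_0) \defeq \sum_{t=1}^{T-1}\int_{\Xi^T} d(\xi_t, \xi_{t+1})\,\drv \liftedProb_0 \;=\; \sum_{t=1}^{T-1} W(\mathbb{P}_t, \mathbb{P}_{t+1}).
\]
Write $q_t \defeq \liftedProb_0(\{\xi_t = \widehat{\xi}_t\}) = \mathbb{P}_t(\{\widehat{\xi}_t\})$ for the quantities that enter the log-likelihood terms.

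Define $\Phi : \Xi^T \to \mathbb{R}^{T+1}$ by
\[
\Phi(\xi) \;=\; \bigl(\Indicator\{\xi_1 = \widehat{\xi}_1\},\,\ldots,\,\Indicator\{\xi_T = \widehat{\xi}_T\},\; {\textstyle\sum_{t=1}^{T-1}} d(\xi_t, \xi_{t+1})\bigr),
\]
so that $\int \Phi \,\drv \liftedProb_0 = (q_1,\ldots,q_T,\,L(\liftedProb_0))$ lies in the convex hull $K$ of $\Phi(\Xi^T)$ inside $\mathbb{R}^{T+1}$. Now minimise the last coordinate over the section $K \cap (\{q_1\}\times\cdots\times\{q_T\}\times\mathbb{R})$; the minimiser lies on a supporting face of $K$ of affine dimension at most $T$, and Carath\'eodory's theorem applied on that face yields points $\xi^{(1)},\ldots,\xi^{(T+1)} \in \Xi^T$ and weights $\lambda_i \geq 0$ with $\sum_i \lambda_i = 1$ for which $\sum_i \lambda_i \Phi(\xi^{(i)}) = (q_1,\ldots,q_T,\,\ell^*)$ for some $\ell^* \leq L(\liftedProb_0)$. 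Set $\liftedProb^* \defeq \sum_{i=1}^{T+1} \lambda_i \delta_{\xi^{(i)}}$, a discrete joint measure with at most $T+1$ atoms in $\Xi^T$.

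The marginals $\mathbb{P}_t^* \defeq (\mathrm{proj}_t)_* \liftedProb^*$ are then each supported on at most $T+1$ points (the $t$-th coordinates of the $\xi^{(i)}$); the log-likelihood contributions are preserved because $\mathbb{P}_t^*(\{\widehat{\xi}_t\}) = q_t$; and since the pairwise $(t,t+1)$-marginals of $\liftedProb^*$ are couplings of $\mathbb{P}_t^*$ and $\mathbb{P}_{t+1}^*$,
\[
\sum_{t=1}^{T-1} W(\mathbb{P}_t^*, \mathbb{P}_{t+1}^*) \;\leq\; \sum_{t=1}^{T-1} \int d(\xi_t,\xi_{t+1})\,\drv \liftedProb^* \;=\; \ell^* \;\leq\; L(\liftedProb_0) \;=\; \sum_{t=1}^{T-1} W(\mathbb{P}_t, \mathbb{P}_{t+1}),
\]
so the objective value at $(\mathbb{P}_t^*)$ is at least that at $(\mathbb{P}_t)$, which is the desired ``same'' (i.e.\ no-worse) objective value in the maximisation. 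The hardest step is rigorously carrying out the Carath\'eodory argument when $\Xi$ is non-compact and $d$ is merely lower semicontinuous: the section minimum $\ell^*$ may not be attained on the convex hull, and the supporting face must be shown to be generated by actual images $\Phi(\xi)$ rather than limit points. I expect this to be handled either by a tightness/weak-convergence argument (exploiting $L \geq 0$ and the lower semicontinuity of $L$ under weak convergence that follows from Theorem~4.1 of \textcite{Optimal-Transport:Villani}) or by directly invoking a classical measure-theoretic extreme-point result of the Rogosinski--Richter type for probability measures constrained by finitely many linear functionals.
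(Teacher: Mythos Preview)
Your plan is essentially the same as the paper's proof. Both arguments pass to the lifted formulation (\ref{problem:lifted-discrete-support}) via the Gluing Lemma, freeze the $T$ indicator moments $q_t=\mathbb{P}_t(\{\widehat{\xi}_t\})$, and then invoke an extreme-point/Carath\'eodory-type principle to replace the joint law by one with at most $T+1$ atoms while not increasing the distance term. The only difference is packaging: the paper writes the last step as an infinite-dimensional linear program in $\mathfrak{P}(\Xi^T)$ with $T$ equality constraints and a linear objective, and then cites \parencite[Appendix~B]{yue2022linear} (which in turn uses \textcite{pinelis-2016-extreme-points-of-moment-sets}) for the fact that such a program admits an optimal solution supported on at most $T+1$ points. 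That citation is precisely the Rogosinski--Richter/Winkler-type statement you reach for at the end, and it is what absorbs the attainment and ``face generated by actual images'' issues you flag. So your identified ``hardest step'' is real, but it is exactly the step the paper dispatches by reference rather than by a hand-rolled Carath\'eodory-on-a-face argument. Finally, both arguments in fact deliver a discrete solution with objective value \emph{at least} that of the original (as you note); the paper's wording ``same'' should be read the same way, and this is all that is needed for the downstream Proposition~\ref{proposition:discrete-support-on-observations}.
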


\begin{proof}
Let $(\Prob_1,\ldots,\Prob_T)$ be a feasible solution to (\ref{problem:WPF}) and let $\liftedProb$ be a feasible solution to (\ref{problem:lifted-discrete-support}) with marginals $\Prob_1,\ldots,\Prob_T$. For this solution $\liftedProb$ the objective function of (\ref{problem:lifted-discrete-support}) features $T$ log terms and we suppose these have values $c_1,\ldots,c_T\in \reals\cup\{-\infty\}$, so $c_t = \log \bigl(\int_{\Xi^T}  \Indicator\{\xi_t=\hat\xi_t\}\,d\liftedProb\bigr)$.  

Replacing each log term in the objective by an equality constraint on its value and exponentiating, we obtain an equivalent linear program over probability measures $\liftedQrob \in\mathfrak{P}(\Xi^T) $:
\begin{alignat}{2}\label{problem:lifted-problem-with-constraint-objective}
    & \kern-0.07em \maximize_{\liftedQrob\in\mathfrak{P}(\Xi^T)} && \quad - \lambda \sum_{t=1}^{T-1} \int_{\Xi^T} d(\xi_t, \xi_{t+1}) \,\drv{\liftedQrob}(\xi_1,\ldots,\xi_T), \\
    & \text{subject to} && \quad \int_{\Xi^T} \Indicator\{\xi_t = \widehat{\xi}_t\}  \,\drv{\liftedQrob}(\xi_1,\ldots,\xi_T) = \exp(c_t), \quad t\in[T]. \notag
\end{alignat}
The problem (\ref{problem:lifted-problem-with-constraint-objective}) is an infinite-dimensional linear program with $T$ equality constraints in the space of Borel probability measures on $\Xi^T$.
By a standard finite-support result for such linear programs (see, e.g., \cite{pinelis-2016-extreme-points-of-moment-sets} or its application in \parencite[Appendix~B]{yue2022linear}), there exists an optimal solution~\(\liftedQrob^\star\) supported on at most \(T+1\) points in~\(\Xi^T\). The marginals of \(\liftedQrob^\star\) then define discrete distributions~\(\Prob_1^\star,\ldots,\Prob_T^\star\) that are feasible for \eqref{problem:WPF}
and achieve the same objective value.
\end{proof}

Our next result makes use of the fact that the distance $d$ satisfies the triangle inequality and shows that no suboptimality is imposed by instead optimizing over distributions supported only on points within the set of observations. The result is established by taking probability mass that is not at an observed point and moving it to an observed point, with the triangle inequality ensuring that overall transport costs do not increase.

\begin{proposition}\label{proposition:discrete-support-on-observations}
For any feasible solution to (\ref{problem:WPF}), there exists a solution consisting of discrete distributions supported only on points within the set of observations with either the same or an improved objective value.
\end{proposition}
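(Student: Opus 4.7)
My plan is to lift the given feasible solution $(\Prob_1, \ldots, \Prob_T)$ to a joint distribution $\liftedProb \in \mathfrak{P}(\Xi^T)$ via the Gluing Lemma (exactly as in the derivation preceding (\ref{problem:lifted-discrete-support})), with bivariate marginals chosen to be optimal couplings, and then push $\liftedProb$ forward under a measurable map $\Phi \colon \Xi^T \to \Xi^T$ that collapses every path onto the observation coordinates. I would then verify that the marginals of $\bm{\mathrm{Q}} \defeq \Phi_\ast \liftedProb$ form a feasible solution of (\ref{problem:WPF}) supported on observations whose objective is at least that of $(\Prob_t)_{t\in[T]}$.

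For $\omega \in \Xi^T$, let $S(\omega) \defeq \{t \in [T] : \omega_t = \widehat{\xi}_t\}$. When $S(\omega) = \emptyset$, set $\Phi(\omega) \defeq (\widehat{\xi}_1, \ldots, \widehat{\xi}_1)$; otherwise, enumerate $S(\omega) = \{s_1 < \cdots < s_k\}$ and set $\Phi(\omega)_t \defeq \widehat{\xi}_{s_j}$, where $s_j$ is the largest element of $S(\omega)$ with $s_j \leq t$ (using $s_j \defeq s_1$ when $t < s_1$). Since $\Phi$ depends only on $S(\omega)$, which takes one of finitely many values, $\Phi$ is measurable; and its image lies in $\{\widehat{\xi}_1, \ldots, \widehat{\xi}_T\}^T$, so each marginal $\Qrob_t$ of $\bm{\mathrm{Q}}$ is supported on the observations.

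Two pointwise comparisons then do the work. First, $\omega_t = \widehat{\xi}_t$ implies $t \in S(\omega)$ and hence $\Phi(\omega)_t = \widehat{\xi}_t$, so the inclusion $\{\omega : \omega_t = \widehat{\xi}_t\} \subseteq \Phi^{-1}(\{\eta : \eta_t = \widehat{\xi}_t\})$ yields $\Qrob_t(\{\widehat{\xi}_t\}) \geq \Prob_t(\{\widehat{\xi}_t\})$, so the log-likelihood terms do not decrease. Second, for each $\omega$ with $|S(\omega)| = k \geq 1$, I would show via iterated triangle inequality that
\[
    \sum_{t=1}^{T-1} d\bigl(\Phi(\omega)_t, \Phi(\omega)_{t+1}\bigr) = \sum_{j=1}^{k-1} d(\omega_{s_j}, \omega_{s_{j+1}}) \;\leq\; \sum_{t=s_1}^{s_k - 1} d(\omega_t, \omega_{t+1}) \;\leq\; \sum_{t=1}^{T-1} d(\omega_t, \omega_{t+1}),
\]
while the case $S(\omega) = \emptyset$ is trivial since the left side is $0$. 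Integrating against $\liftedProb$ and using that the $(t, t+1)$-marginal of $\liftedProb$ was chosen to achieve $W(\Prob_t, \Prob_{t+1})$ while the corresponding marginal of $\bm{\mathrm{Q}}$ is merely an admissible coupling of $(\Qrob_t, \Qrob_{t+1})$, I obtain $\sum_t W(\Qrob_t, \Qrob_{t+1}) \leq \sum_t W(\Prob_t, \Prob_{t+1})$, so the Wasserstein penalty does not increase.

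The main conceptual obstacle is choosing $\Phi$ so that all three properties (image in the observation set, non-decreasing log-likelihood, non-increasing transport cost) hold simultaneously. The "most recent prior observation" rule is natural because it keeps each $\omega_{s_j}$ pinned at $\widehat{\xi}_{s_j}$, leaving log-likelihoods intact, and it flattens the cost of the gaps between pinned times in precisely the shape the triangle inequality handles. Measurability of $\Phi$ is routine because it takes only finitely many values, one per subset of $[T]$, and the remaining step is only the bookkeeping to turn the pointwise path-length bound into the required comparison of Wasserstein distances.
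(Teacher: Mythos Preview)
Your proof is correct and shares the paper's core idea: lift to a joint distribution $\liftedProb$ on $\Xi^T$, then collapse each path onto coordinates that hit the observations, using the triangle inequality to control the transport cost. The execution differs in two useful ways. First, the paper invokes Lemma~\ref{lemma:discrete-support} to pass to a finitely supported $\liftedProb$ and then argues by an iterated local move (replace $\xi_t$ by $\xi_{t+1}$ when $\xi_t$ is not an observation); you instead define a single measurable pushforward $\Phi$ and apply it globally, which makes Lemma~\ref{lemma:discrete-support} unnecessary for this proposition and turns the informal iteration into one clean inequality. Second, the paper snaps forward to the next hit time while you snap backward to the most recent one; these are symmetric and both work, but your version packages the path-length bound $\sum_{j} d(\omega_{s_j},\omega_{s_{j+1}}) \le \sum_{t=s_1}^{s_k-1} d(\omega_t,\omega_{t+1})$ in a way that makes the role of the triangle inequality completely transparent. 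The trade-off is that the paper's local move is conceptually simpler to describe, whereas your $\Phi$ requires the (easy) measurability check; overall your route is slightly more self-contained.
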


\begin{proof}
By Lemma~\ref{lemma:discrete-support}, we may restrict attention to discrete solutions of (\ref{problem:lifted-discrete-support}). Consider such a solution $\liftedProb\in\mathfrak{P}_{T+1}(\Xi^T)$  with  $\liftedProb\bigl(\{(\xi_1,\ldots,\xi_T)\}\bigr)>0$ for some point  $(\xi_1,\ldots,\xi_T)\in\Xi^T$, with $\xi_t \notin \{\widehat{\xi}_1,\ldots,\widehat{\xi}_T\}$ for some $t \in [T]$, where the observations are $\widehat{\xi}_1,\ldots,\widehat{\xi}_T$. This assignment of probability mass does not contribute to the $t$-{th} log term in the objective function of  (\ref{problem:lifted-discrete-support}), so changing $\xi_t$ only impacts the objective through the distance term. We consider two types of change. First suppose $\xi_{t+1} \in \{\widehat{\xi}_1,\ldots,\widehat{\xi}_T\}$. Then the sum of the distance terms in the objective function would not be increased by instead assigning the probability $\liftedProb\bigl(\{(\xi_1,\ldots,\xi_T)\}\bigr)$ to the alternate point $(\xi_1,\ldots,\xi_{t-1},\xi_{t+1},\xi_{t+1},\ldots,\xi_T)\in\Xi^T$, since by the triangle inequality,
\[
  d(\xi_{t-1},\xi_{t+1})
   \le d(\xi_{t-1},\xi_t)+d(\xi_t,\xi_{t+1}).
\]
For the second type of change we suppose that $\xi_{t-1} \in \{\widehat{\xi}_1,\ldots,\widehat{\xi}_T\}$. Then we can assign the probability $\liftedProb\bigl(\{(\xi_1,\ldots,\xi_T)\}\bigr)$ to the alternate point $(\xi_1,\ldots,\xi_{t-1},\xi_{t-1},\xi_{t+1},\ldots,\xi_T)\in\Xi^T$ also without increasing total cost.

Provided that there is at least one element $\xi_s$ of $(\xi_1,\ldots,\xi_T)$ with $\xi_{s} \in \{\widehat{\xi}_1,\ldots,\widehat{\xi}_T\}$, then repeatedly making the first type of change for the largest $t<s$ such that  $\xi_t$ is not within the observations, or making the second type of change for the smallest $t>s$ such that $\xi_t$ is not within the observations, will produce a solution supported only on the observations which is at least as good. If on the other hand $\xi_1, \ldots , \xi_{T} \notin \{\widehat{\xi}_1,\ldots,\widehat{\xi}_T\}$, then this point does not contribute positively to the objective function, and the probability $\liftedProb\bigl(\{(\xi_1,\ldots,\xi_T)\}\bigr)$ can be assigned to $(\widehat{\xi}_1,\ldots,\widehat{\xi}_1)\in\Xi^T$, thereby improving the objective value.

Hence, all probability mass can be concentrated on tuples whose coordinates belong to the observed samples $\{\widehat{\xi}_1,\ldots,\widehat{\xi}_T\}$, giving a discrete optimal solution supported only on the~observations. 
\end{proof}

\section{Network-Flow Reformulation} \label{section:network-flow}
Building on the finite-dimensional reduction of Proposition~\ref{proposition:discrete-support-on-observations}, in this section we show how (\ref{problem:WPF}) can be reformulated as a network-flow problem, and further how it can be reduced in size without altering the set of terminal distributions that can occur in an optimal solution. With Proposition~\ref{proposition:discrete-support-on-observations} in hand, noting that the Wasserstein distance between two discrete distributions can be found through a transportation linear program, we write $p_t(i)$ for the amount of probability mass assigned to observation $i$ by $\mathbb{P}_t$, $\gamma_t(i,j)$ for the amount of probability mass transported between observations $i$ and $j$ during the shift from $\mathbb{P}_t$ to $\mathbb{P}_{t+1}$, and $d(i,j)$ for the distance between observations $i$ and $j$. Thus, to find an optimal solution to (\ref{problem:WPF}), we solve the problem: 
\begin{alignat}{4}\label{problem:network-flow}
    & \kern-0.07em \maximize_{\substack{{p}_1,\ldots,{p}_T\\\gamma_1,\ldots,\gamma_{T-1}}} &&\quad \sum_{t=1}^{T} \log(p_t(t)) - \lambda \sum_{t=1}^{T-1}\sum_{j=1}^{T}\sum_{i=1}^{T} d(i,j)\gamma_t(i,j), \qqquad \\
    & \text{subject to} &&\quad \sum_{i=1}^{T} p_1(i) = 1, \qqquad  \notag\\
    & &&\quad p_{t}(i) = \sum_{j=1}^{T} \gamma_t(i,j), \qqquad i\in[T], \;  t\in[T-1],\notag\\
    & && \quad \sum_{i=1}^{T} \gamma_t(i,j) = p_{t+1}(j), \qqquad j\in[T], \; t\in[T-1],\notag\\
    & &&\quad {p}_t(i) \in \mathbb{R}_+,\vphantom{\sum^T_1} \qqquad  i\in[T], \; t\in[T], \notag\\
    & &&\quad \gamma_t(i,j) \in \mathbb{R}_+, \qqquad  i,j\in[T], \; t\in[T-1]. \notag
\end{alignat}

\noindent This is a type of convex network-flow problem. The probability mass moving through the network is conserved because of the constraints and so the requirement that the probabilities sum to $1$ in the first period is retained for all later periods. It can be seen that this problem has an optimal solution due to the extreme-value theorem, and we therefore conclude that (\ref{problem:WPF}) has an optimal solution as well.

In this network, flow is between an observation $i$ at time $t$ and an observation $j$ at time~$t+1$, i.e., on arcs between $(t,i)$ and $(t+1,j)$. There is conservation of flow at all nodes $(t,i)$ except the sources and sinks at times $t=1$ and $T$. We define a \emph{path} as a sequence of nodes at consecutive time intervals $(t,i_0), (t+1,i_1), \ldots, (t+k,i_k)$. Observe that the flow in the network does not contain any cycles, so from the flow decomposition property \parencite{ahuja1993network}, it can be decomposed into flows along paths (a \emph{path flow}) from nodes at time $t=1$ to time $t=T+1$. We use this property to prove the result below which restricts the types of flow considered at optimality. Since we are interested in estimation applications which use the terminal distribution, a key property is that the restrictions we make do not alter the terminal probabilities 
at optimality.

\begin{proposition}\label{prop:flow-simplification}
For any optimal solution to (\ref{problem:network-flow}), we can construct a new optimal solution in which $p_T$ is unchanged and in which $\gamma_t(i,j)=0$ unless either $i=j$ or $i=t$. 
\end{proposition}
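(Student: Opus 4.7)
The plan is to take an arbitrary optimal solution of (\ref{problem:network-flow}), decompose its flow into a weighted collection of source-to-sink paths as described immediately before the proposition, and replace each path $P_k = (i_{k,1}, \ldots, i_{k,T})$ with flow $f_k > 0$ by a ``canonical'' variant $P_k'$ that uses only the allowed arcs and still terminates at $i_{k,T}$. Because any optimal solution must have $p_t(t) > 0$ for every $t$ (otherwise the objective is $-\infty$), and $p_t(t)$ aggregates flow through the diagonal nodes $(t,t)$ visited along the paths, the replacement will be designed so that every diagonal visited by $P_k$ is still visited by $P_k'$; the triangle inequality will then control the Wasserstein cost.

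The construction of $P_k'$ is as follows. For each path $P_k$, list its diagonal visits $D_k = \{t : i_{k,t} = t\}$ as $d_1 < \cdots < d_\ell$, using the conventions $d_0 := 0$ and $\ell \geq 0$. Define $P_k'$ to take the value $d_j$ on each block $[d_{j-1}+1, d_j]$ for $j = 1, \ldots, \ell$, and the value $i_{k,T}$ on the final block $[d_\ell + 1, T]$. Every transition of $P_k'$ is then either a self-loop (within a block) or a jump from a diagonal node $(d_j, d_j)$ (between blocks), so only arcs satisfying $i = j$ or $i = t$ are used. The terminal obeys $i_{k,T}' = i_{k,T}$, each diagonal time in $D_k$ is still visited, and possibly one extra diagonal appears at $t = i_{k,T}$ when $i_{k,T} > d_\ell$. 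Applying the triangle inequality along each original sub-path from $(d_j, d_j)$ to $(d_{j+1}, d_{j+1})$, and similarly along the tail ending at $i_{k,T}$, bounds the total Wasserstein cost of $P_k'$ above by that of $P_k$.

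Aggregating the replaced paths yields a feasible flow in which $p_T$ is unchanged, each $p_t(t)$ is weakly larger (since each original diagonal visit is preserved across every path), and the total Wasserstein cost is weakly smaller, so the objective value weakly improves. Optimality of the original forces equality, and therefore the new solution is itself optimal with the required structure. The main technical obstacle is choosing $P_k'$ to satisfy three requirements at once --- using only allowed arcs, preserving every diagonal visited (to avoid depressing any log term), and maintaining the terminal observation (so that $p_T$ is unchanged); the piecewise-constant construction that ``anticipates'' each upcoming diagonal and then settles at $i_{k,T}$ accomplishes all three, with the triangle inequality providing the Wasserstein-cost bound.
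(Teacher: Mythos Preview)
Your proof is correct and follows essentially the same approach as the paper: decompose the optimal flow into source-to-sink paths, replace each path by a canonical one that visits the same diagonal nodes using only allowed arcs, and invoke the triangle inequality to bound the transport cost. The one noteworthy difference is the tail treatment: the paper lets the replacement path rest at the last diagonal $t_U$ and then argues by contradiction (any change to $p_T$ would force a strict cost improvement), whereas you send the tail directly to $i_{k,T}$ so that $p_T$ is preserved by construction---your variant is slightly more direct and avoids the optimality-contradiction step.
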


\begin{proof}
We prove this result by construction, starting from an optimal solution and noting that if any $p_T(i)$ is changed then there will be a strict improvement in the objective value. For an optimal solution $(p^{\star}, \gamma^{\star})$, let $\cF$ be a path-flow decomposition. Consider $f\in\cF$. This involves a flow $\delta$ along a path $(1,i_1),  \ldots, (T,i_T)$ and we write $\cT \defeq \{ t : i_t=t \}$ for the set of times when this flow contributes to the log terms. We also write the elements of $\cT$ as an ordered set $\{t_1, \ldots, t_{S} \}$ with $t_1 <  \cdots < t_{S}$.

We now replace the flow $f$ with a new flow $f^{\prime}$ satisfying the statement of the Lemma. For all times $t_i < t < t_{i+1}$ we set $f^{\prime}\bigl((t,t_{i+1}),(t+1,t_{i+1})\bigr)=\delta$ and the other $f^{\prime}$ values to zero for arcs from $t$ to $t+1$. For all $t=t_i$, we set $f^{\prime}\bigl((t,t),(t+1,t_{i+1})\bigr)=\delta$ and the other $f^{\prime}$ values to zero for arcs from $t$ to $t+1$. If $t_1 >1$, then for all $t<t_1$ we set $f^{\prime}\bigl((t,t_{1}),(t+1,t_{1})\bigr)=\delta$ and the other $f^{\prime}$ values to zero for arcs from $t$ to $t+1$. If $t_{S} < T$, then for all $t_{S} \leq t < T$ we set $f^{\prime}((t,t_{S}),(t+1,t_{S}))=\delta$ and the other $f^{\prime}$ values to zero for arcs from $t$ to $t+1$. This change is illustrated in Figure \ref{figure:proof-illustration}.
\begin{figure}[H] 
\centering
\centerline{\includegraphics[trim={0 2.5cm 0 2.5cm},clip,width=19.8cm] {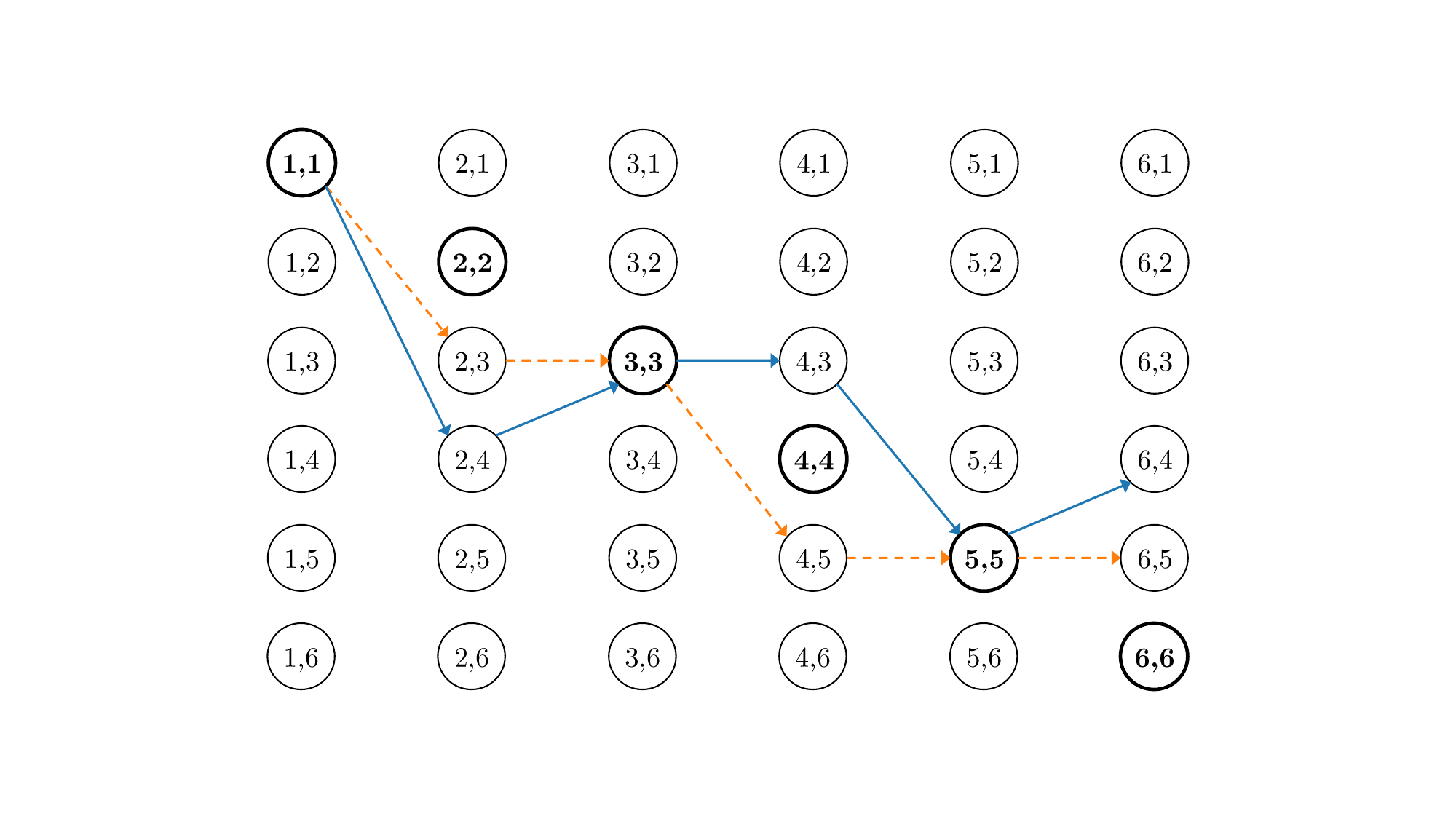}}
\caption{\textbf{Construction Used in the Proof of Proposition~\ref{prop:flow-simplification}.} Solid arrows show the initial flow $f$ and dashed arrows show the newly constructed flow $f^{\prime}$. Bold nodes on the diagonal contribute to the log terms in the objective, so $\cT=\{1,3,5\}$ here.}
\label{figure:proof-illustration}
\end{figure}

\noindent Note that $f^{\prime}$ is a path flow that matches the flow into the nodes where $f$ contributes to the log terms in the objective. Using the triangle inequality, we can also see that the total distance along $f^{\prime}$ is no greater than the distance along $f$, since wherever possible $f^{\prime}$ moves directly to the next node, rather than possibly moving via other nodes. Moreover, there is zero distance along the sections of the path before $t_1$ or after $t_{S}$.

Repeating this construction for each $f\in\cF$ and combining the resulting path flows yields a feasible solution with a no-worse objective value, and in which the only non-negative flows~$\gamma_t(i,j)>0$ occur when $i=j$ or $i=t$. If $p_T^{\star}$ were altered through any of the changes from $f$~to~$f^{\prime}$, then we must have some $t_{S}<T$ and $i_T \neq t_{S}$, in which case there will be nonzero distances in the last section of the path replaced with zero distances. This gives an improvement from the change and therefore contradicts the optimality of $(p^{\star}, \gamma^{\star})$. We conclude that if the original flow is optimal, then so is the new flow.
\end{proof}

\noindent This result enables us to rewrite the problem (\ref{problem:network-flow}) with a reduced set of flow variables from one observation to a later observation. We consider source and sink nodes $0$ and $T+1$, and define a variable $x(i,j)$ for the probability flow between different nodes $i$ and $j$. We set 
\begin{alignat*}{2}
    & x(0,j) = p_1(j), \qquad j\in[T],\\
    &x(i,j) = \gamma_i(i,j), \qquad i\in[j-1], \; j\in[T],\\
    &x(i,T+1) = p_T(i), \qquad i\in[T],
\end{alignat*}
and all other $x(i,j)$ variables to $0$,  applying Proposition~\ref{prop:flow-simplification}. As a consequence,
\begin{equation*}
    \sum_{i=0}^T x(i,j) = p_j(j), \quad j\in[J].
\end{equation*}
Thus, to find the terminal distributions in optimal solutions to (\ref{problem:WPF}), we can solve the problem:
\begin{alignat}{4}\label{problem:reduced-network-flow}
    & \kern-0.07em \maximize_{x} &&\quad \sum_{j=1}^{T} \log\biggl(\ttspace\sum_{i=0}^T x(i,j)\biggr) - \lambda \sum_{j=1}^{T}\sum_{i=1}^{T} d(i,j)x(i,j), \\
    & \text{subject to} &&\quad \sum_{j=1}^{T} x(0,j) = 1, \notag\\
    & &&\quad \sum_{i=0}^T x(i,j) = \sum_{k=1}^{T+1} x(j,k), \qqquad j\in[T],  \notag\\
    & &&\quad x(i,j) = 0\vphantom{\sum^T_1}, \qqquad j\in[i], \; i\in[T], \notag \\
     & &&\quad x(0,T+1) = 0, \vphantom{\sum_1}\qqquad \notag\\
    & &&\quad x(i,j) \in \mathbb{R}_+, \qqquad i\in 0\cup[T], \; j\in[T+1]. \notag
\end{alignat}
\noindent Ignoring variables which are fixed to $0$, the reduced problem (\ref{problem:reduced-network-flow}) has $O(T^2)$ variables and $O(T)$ constraints, whereas the original problem (\ref{problem:network-flow}) has $O(T^3)$ variables and $O(T^2)$ constraints.

We may reformulate (\ref{problem:reduced-network-flow}) by replacing each $\log(p_t)$ term in the objective with a variable $z_t\in\reals$ and imposing the constraint $z_t \leq \log(p_t)$. These amount to exponential-cone constraints, and the result is an exponential-cone program that can be readily addressed with standard solvers.

To solve the time-grouped version of the problem we simply add additional constraints to ensure that there is no flow between observations within the same time epoch.
\begin{proposition}\label{lemma:time-grouped}
The problem~(\ref{problem:reduced-network-flow}) with the additional constraints 
\begin{equation*}
    x(i,j)=0\quad\text{if}\quad s(i)=s(j),\quad i,j\in[T],
\end{equation*}
has the same optimal value as (\ref{problem:WPF-G}). 
\end{proposition}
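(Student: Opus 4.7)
The plan is to prove the equality of optimal values by converting feasible solutions between the two problems in both directions, with the flow decomposition property as the central tool.

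For the inequality that the optimum of the modified (\ref{problem:reduced-network-flow}) is at most that of (\ref{problem:WPF-G}), I would start with any feasible $w$ and decompose it into path flows. The added constraint $w(i,j)=0$ when $s(i)=s(j)$ forces each such path to visit diagonals $(t_1,t_1),\ldots,(t_U,t_U)$ with strictly increasing epochs $s(t_1) < \cdots < s(t_U)$. For each path of mass $\delta$, I would define a trajectory $(\ell_1,\ldots,\ell_S)$ by setting $\ell_s = t_{i+1}$ whenever $s(t_i) < s \leq s(t_{i+1})$, with $t_0$ and $t_{U+1}$ interpreted as source and sink. Weighting these trajectories by the path masses yields a joint measure on $\Xi^S$ whose marginals provide feasible distributions $\mathbb{P}_1,\ldots,\mathbb{P}_S$ for (\ref{problem:WPF-G}) and whose consecutive bivariate marginals give candidate transport plans. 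A direct check shows that the log terms match exactly, since each diagonal pickup corresponds to an own-epoch visit of the trajectory, and the induced transport cost sums to the modified reduced distance cost. Because each Wasserstein distance is bounded above by the corresponding transport cost, the resulting TG-WPF objective is at least the original modified reduced objective.

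For the reverse direction, I would take any feasible $(\mathbb{P}_s)_s$ for (\ref{problem:WPF-G}) together with optimal transport plans $\gamma_s$, and apply the Gluing Lemma to obtain a joint measure $\mu$ on $\Xi^S$. Using Proposition~\ref{proposition:discrete-support-on-observations} we may assume $\mu$ is supported on observations, and then decompose it into path masses. For each path $(\ell_1,\ldots,\ell_S)$ with nonempty own-epoch set $\{s : s(\ell_s) = s\} = \{s_1 < \cdots < s_U\}$, I would route the mass through the diagonals at $t_i = \ell_{s_i}$ in the reduced network; the triangle inequality gives $\sum_i d(t_i,t_{i+1}) \leq \sum_{s=1}^{S-1} d(\ell_s,\ell_{s+1})$, so this routing's distance is at most the path's TG-WPF distance. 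Summing over all such paths yields a feasible $w$ whose log terms match the TG-WPF ones and whose distance cost is at most $\sum_s W(\mathbb{P}_s,\mathbb{P}_{s+1})$.

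The main obstacle is handling those paths in $\mu$ with empty own-epoch set, since such a path has no diagonal to pass through, while the constraint $w(0,T+1)=0$ forbids routing directly from source to sink. The resolution is to route such a path's mass through the single diagonal at $\ell_1$ (via source, then $\ell_1$, then sink); this is admissible, incurs zero distance cost, and merely augments the log term $\log(p_{\ell_1}(\ell_1))$ beyond the corresponding TG-WPF value. Since $\log$ is monotonically increasing, this augmentation can only raise the modified reduced objective, preserving the desired inequality.
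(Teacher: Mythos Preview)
Your argument is correct and gives a valid proof of the proposition, but it proceeds differently from the paper's own proof. The paper establishes the result by re-running the entire reduction chain for the time-grouped problem: it first passes to the lifted formulation on $\Xi^S$, then invokes the analogues of Lemma~\ref{lemma:discrete-support} and Proposition~\ref{proposition:discrete-support-on-observations}, next writes down the full network-flow problem~(\ref{problem:network-flow}) with the extra ``same-epoch'' constraints, adapts the flow-simplification construction of Proposition~\ref{prop:flow-simplification} (restricting movements to the last observation of each epoch), and finally collapses to~(\ref{problem:reduced-network-flow}) with the added constraint. In contrast, you bypass the intermediate full network~(\ref{problem:network-flow}) entirely and construct direct path-level correspondences between feasible points of~(\ref{problem:WPF-G}) and of the modified~(\ref{problem:reduced-network-flow}), using the triangle inequality in one direction and an explicit trajectory assignment in the other. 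Your route is more self-contained and avoids re-deriving Proposition~\ref{prop:flow-simplification}; the paper's route makes clearer how the time-grouped problem sits as a constrained instance of the ungrouped one.

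Two minor points worth tightening. First, your appeal to Proposition~\ref{proposition:discrete-support-on-observations} is stated for (\ref{problem:WPF}), not (\ref{problem:WPF-G}); what you actually need is the observation (which the paper also makes explicitly) that the same triangle-inequality argument carries over to the $\Xi^S$-lifted version of (\ref{problem:WPF-G}). Second, before you can ``decompose $\mu$ into path masses'' you need $\mu$ to be discrete; the paper secures this via the Lemma~\ref{lemma:discrete-support} analogue, and you should either cite that step or note that restricting to the finite set of observations already forces finiteness of the support.
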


\begin{proof}
For the time-grouped problem the equivalent of the reformulation (\ref{problem:lifted-problem-with-constraint-objective}) is
\begin{alignat}{2}\label{problem:time-grouped-with-constraint-objective}
    & \kern-0.07em \maximize_{\liftedProb\in\mathfrak{P}(\Xi^S)} && \quad - \lambda \sum_{s=1}^{S-1} \int_{\Xi^S} d(\xi_s, \xi_{s+1}) \,\drv\liftedProb(\xi_1,\ldots,\xi_S), \\
    & \text{subject to} && \quad \int_{\Xi^S} \Indicator\{\xi_{s(t)} = \widehat{\xi}_t\}  \,\drv\liftedProb(\xi_1,\ldots,\xi_S) = \exp(c_t), \quad t\in[T] \notag
\end{alignat}
and we can again conclude that imposing an additional constraint $\liftedProb\in\mathfrak{P}_{T+1}(\Xi^S)$ results in no change to the optimal objective value. Now consider the problem (\ref{problem:WPF-G}) and suppose that $(\xi_1,\ldots,\xi_S)\in\Xi^S$ is a feasible solution satisfying $\xi_s \notin \{\widehat{\xi}_1,\ldots,\widehat{\xi}_T\}$ for some $s \in [S]$. Then the arguments in the proof of Proposition \ref{proposition:discrete-support-on-observations} apply and we establish that we can restrict solutions of (\ref{problem:WPF-G}) to the set of observations. 

For the network-flow reformulation of (\ref{problem:WPF-G}), we obtain (\ref{problem:network-flow}) with the additional constraints that for all $t \in [T-1]$ with $s(t)=s(t+1)$, then $p_t(i)=\gamma_t(i,i)=p_{t+1}(i)$  for $i \in [T-1]$ and $\gamma_t(i,j)=0$ if $j \neq i$. These constraints ensure $\mathbb{P}_{t}=\mathbb{P}_{t+1}$. For this problem we need a slightly different construction to that in Proposition~\ref{prop:flow-simplification}. There is a need to restrict the movement of probability to the first opportunity after the set of observations made at the same time epoch. Thus we have $\gamma_t(i,j)=0$ unless either $i=j$; or $s(t)=i$ and $s(t) \ne s(t+1)$. With these changes and the corresponding transformation of flows $f$ to $f^{\prime}$ the proof of Proposition \ref{prop:flow-simplification} goes through unchanged. The final step to the formulation of (\ref{problem:reduced-network-flow}) with the additional constraint~$x(i,j)=0$~if~$s(i)=s(j)$ follows by setting $x(i,j)=\gamma_t(i,j)$ for all $t$ with $s(t)=i$ and $s(t) \neq s(t+1)$.  
\end{proof}

The following result ensures that (\ref{problem:reduced-network-flow}) has unique  $x(i,T+1)$, $i\in[T]$, values at optimality, thus implying that the optimal terminal distribution for (\ref{problem:network-flow}) is unique, with the same implication for (\ref{problem:WPF}) when restricting to discrete distributions supported only on the observations. We say that a set of scalars $\{c_1,  \ldots,c_N\}$ 
has \emph{unique subset sums} if there are no two subsets $\cI,\cI^{\prime} \subseteq [N]$ with $\cI \neq \cI^{\prime}$ and $\sum_{i \in \cI}c_i = \sum_{i \in \cI^{\prime}}c_i$. 

\begin{theorem} \label{theorem:unique-solution}
    If each set of the nonzero distances along any acyclic path between two different observations (i.e., a set 
    $\{d(i_1,i_2),d(i_2,i_3), \ldots, d(i_{k-1},i_k) \}$ with $i_1,i_2, \ldots, i_k$ all differing) has unique subset sums, then every optimal solution to (\ref{problem:network-flow}) has the same terminal probability distribution, 
    $p_T$; that is, the optimal terminal distribution is unique.
    \label{proposition:unique-P_T}
\end{theorem}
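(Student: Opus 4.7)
The plan is to pin down the throughputs first via strict concavity of $\log$, and then to analyze the residual graph of (\ref{problem:reduced-network-flow}) to force uniqueness of the sink arcs $w(i, T+1)$.

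I would first observe that the objective of (\ref{problem:reduced-network-flow}) depends on $w$ only through the throughputs $q_j = \sum_{i=0}^{j-1} w(i,j)$ inside the $\log$ terms, plus a term that is linear in $w$. Strict concavity of $\log$ then forces $(q_1,\ldots,q_T)$ to be uniquely determined at optimality: averaging two optima with different $q$-profiles would strictly improve the log-sum while leaving the linear part unchanged. Holding this unique $q$-profile fixed, the remaining optimization over $w$ is a linear minimum-cost network-flow LP with the usual conservation constraints augmented by the throughput equalities $\sum_i w(i,j) = q_j$. If $w^1$ and $w^2$ are both optimal, their difference decomposes into simple zero-cost cycles in the residual graph, and the throughput equalities additionally require each visit to a middle node in any such cycle to use one forward and one backward residual arc. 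Any cycle that avoids the sink then leaves every $w(i, T+1)$ unchanged, so the remaining task is to rule out zero-cost residual cycles $C$ that pass through $T+1$.

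Being simple, any such $C$ enters the sink via a forward arc $(i, T+1)$ and leaves via the backward version of some distinct $(i', T+1)$; source-arcs and sink-arcs carry no distance cost, so $C$'s zero-cost condition is carried entirely by its middle arcs. If $C$ also avoids the source, its middle arcs form a single acyclic observation-path $i' = n_0, n_1, \ldots, n_K = i$; partitioning these edges into $C$'s forward set $F$ and backward set $B$ gives $\sum_F d = \sum_B d$, but since $F \neq B$ as subsets of that path's edge-distance set, this contradicts the unique-subset-sums hypothesis. If instead $C$ visits both $0$ and $T+1$, its middle arcs split into two disjoint observation-paths $P_1 : j' \to \cdots \to i$ and $P_2 : i' \to \cdots \to j$; here I would splice their node-sequences via an auxiliary edge $(i, i')$ to form a single acyclic observation-path $P$ whose edge-distance set still contains every middle-arc distance of $C$, so the same $F \neq B$ subset-sum argument applies.

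The hardest piece is this last sub-case, in which the middle arcs of $C$ live on two disconnected pieces: without gluing them through the auxiliary edge $(i, i')$, the unique-subset-sums hypothesis cannot be brought to bear. A secondary point of care is verifying that the throughput-alternation constraint at middle nodes rules out degenerate configurations---for instance, the candidate cycle $0 \to i \to T+1 \to i' \to 0$ with no middle arcs is infeasible because the visit to $i$ would use two forward arcs, violating throughput preservation---so that the case split is exhaustive. With these steps in place, no zero-cost residual cycle touches the sink, every optimal $w$ agrees on the sink arcs, and $p_T$ is uniquely determined.
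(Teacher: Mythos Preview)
Your proposal is correct and follows essentially the same route as the paper: pin down the throughputs $q_j$ via strict concavity of $\log$, reduce to a linear min-cost-flow problem, decompose the difference of two optima into zero-cost cycles, and extract a subset-sum contradiction from any such cycle through the sink $T+1$. Your treatment is in fact more explicit than the paper's on two points---the throughput-preservation constraint at observation nodes (which is what justifies calling each residual cycle zero-cost) and the case where the cycle also passes through the source node $0$ (your splicing via the auxiliary edge $(i,i')$)---but these are refinements within the same argument rather than a genuinely different approach.
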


\begin{proof}
We prove this for (\ref{problem:reduced-network-flow}) which establishes the result for (\ref{problem:network-flow}). The problem (\ref{problem:reduced-network-flow}) is a maximization with a concave objective function and linear constraints, so the optimal solutions form a convex set. Consider two optimal solutions $x$ and $x^{\prime}$ that differ on the flows to $T+1$; obtaining a contradiction will be enough to establish the result. Moving along the line of convex combinations of $x$ and $x^{\prime}$, strict concavity of $\log $ implies that $\sum_{i=0}^{T}x(i,j) =\sum_{i=0}^{T}x^{\prime}(i,j)$ for each $j\in[T]$. Thus the objective values for $x$ and $x^{\prime}$ match for the log terms in the objective. Since these are both optimal solutions and have the same objective value, we deduce that
\begin{equation*}
\sum_{j=1}^{T}\sum_{i=1}^{T} d(i,j)x(i,j) = \sum_{j=1}^{T}\sum_{i=1}^{T} d(i,j)x^{\prime}(i,j).
\end{equation*}
Now consider $x-x^{\prime}$: this is a nonzero flow through the network with positive and negative elements and zero net flow in and out of all nodes (including $0$ and $T+1$). It can hence be decomposed into cycles. By assumption, $x-x^{\prime}$ includes a nonzero flow to $T+1$. Among these cycles including $T+1$ we choose $x^{\prime\prime}$ and write $i_{1}$ and $i_{2}$ for the two nodes adjacent to $T+1$.

Note that for any pair $(i,j)$, we have $x^{\prime\prime}(i,j) \neq 0$ only if $x(i,j)- x^{\prime}(i,j)$ is nonzero, and hence at least one of $x(i,j)$ or $x^{\prime}(i,j)$ is nonzero. Now consider solutions $(x+x^{\prime})/2 \pm \delta x^{\prime\prime}$. Because $x^{\prime\prime}$ is a cycle these two solutions satisfy a conservation of flow property at every node other than $0$ and $T+1$, and their components are all non-negative for small enough $\delta$. Thus they are both feasible solutions.

Since $x$ and $x^{\prime}$ are both optimal, we arrive at a contradiction unless 
\begin{equation}\label{equation:cycle-distance-sum-is-zero}
\sum_{j=1}^{T}\sum_{i=1}^{T} d(i,j)x^{\prime\prime}(i,j) = 0.
\end{equation}
(We exclude $x^{\prime\prime}(i_{1},T+1)$ and $x^{\prime\prime}(T+1,i_{2})$ from this sum as the multiplying distance is $0$.) With $x^{\prime\prime}$ being a cycle, each nonzero term has the same magnitude. In view of (\ref{equation:cycle-distance-sum-is-zero}), splitting $x^{\prime\prime}$ into positive and negative elements then contradicts the unique subset sum property for the path from $i_{1}$ to $i_{2}$ that does not visit $T+1$. Hence, our supposition that $x \neq x^{\prime}$ is wrong and the result is established.
\end{proof}

The unique subset sum condition of Theorem~\ref{theorem:unique-solution} is unrestrictive when $\Xi$ is not discrete. In this case the condition will always hold for a small perturbation of the observations. 

In the case where the set of all nonzero distances has the unique subset sum property (which is a stronger condition than that of Theorem \ref{theorem:unique-solution}) we have the stronger result that there is a unique optimal solution to (\ref{problem:reduced-network-flow}). However, this condition tends to fail when $\Xi$ is one dimensional, due to the fact that transporting mass from observation  $\widehat{\xi}_1$ to observation $\widehat{\xi}_2$ has the same cost as going via $\widehat{\xi}_3$ if $\widehat{\xi}_3$ lies between $\widehat{\xi}_1$ and $\widehat{\xi}_2$.

\begin{example}\label{example:small-network}
It will be helpful to consider a small one-dimensional example. Suppose that we have the $6$ observations $6.13$, $7.85$, $6.47$, $4.91$, $5.54$, and $7.13$ (in sequence), and we use the standard Euclidean norm to compute distances. Solving (\ref{problem:reduced-network-flow}) with $\lambda=4$ yields the optimal objective value $-8.7052$ and a set of flows shown in Figure~\ref{figure:example-network-solution}. The terminal distribution assigns probability $0.275$ to observation~2 ($7.85$), probability $0.021$ to observation~3 ($6.47$), probability $0.325$ to observation~5 ($5.54$), and  probability $0.379$ to observation~6 ($7.13$). 
\begin{figure}[H]
\centering
\centerline{\includegraphics[trim={0 1.5cm 0 1.75cm},clip,width=13.75cm]{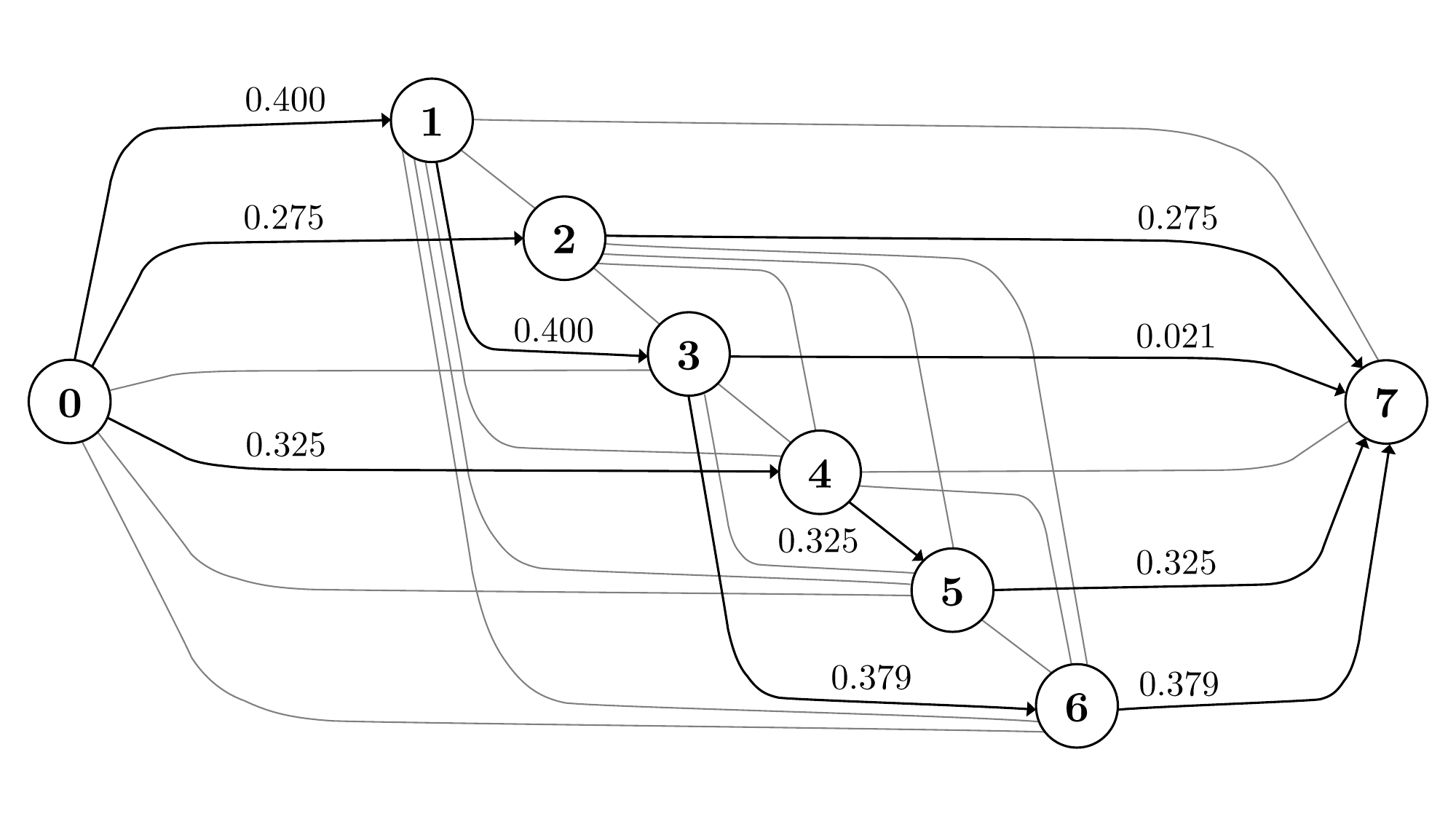}}
\caption{\textbf{Optimal Solution for Example~\ref{example:small-network}.} Arcs that have no flow are shown in grey.}
\label{figure:example-network-solution}
\end{figure}

\noindent Note that reversing time yields the same optimal flow pattern with the directions reversed. Thus, we can also see from the figure the optimal solution to (\ref{problem:reduced-network-flow}) if the observations were instead collected in reverse order. The terminal distribution then assigns probabilities $0.400$, $0.275$ and $0.325$ to observations $6.13$, $7.85$  and $4.91$ respectively. This demonstrates that the order that the observations are collected influences the optimal solution for the WPF method. \hfill \qedsymbol
\end{example}

\section{Analysis of Components}\label{section:analysis-of-components}
The solution of the network-flow problem (\ref{problem:reduced-network-flow}) provides a natural grouping of 
observations into subsets based on the way that probability mass is transported through the network. By studying such groupings, in this section we provide structural results that characterise how the probabilities assigned to each observation are affected by the value of the observations and the periods in which they are observed.

A feasible solution $x$ to (\ref{problem:reduced-network-flow}) defines a graph on the nodes $\{1, \ldots, T\}$ with arcs $(i,j)$ where either $x(i,j) >0$ or $x(j,i)>0$. We say that a \emph{component} $\mathcal{C} \subseteq [T]$ in a feasible solution $x$ is a component of this graph (i.e., a maximal subgraph in which a path between every two~nodes~exists). 

In the example of Figure~\ref{figure:example-network-solution} there are three components: the first component consists of observations~1, 3, and 6 ($6.13$, $6.47$, and $7.13$), the second component is just observation~2 ($7.85$) as a singleton, and the third component consists of observations~4 and 5 ($4.91$ and $5.54$). Within the first component the terminal distribution assigns no probability mass to observation $1$, and within the last component it assigns no probability mass to observation $4$.

\subsection{Bounds on Component Mass} 
It will be helpful to work with a path-based description of the network-flow version of \eqref{problem:WPF}. We  consider all possible paths  $\cP_1,\ldots,\cP_L\subseteq[T]$ between the source and sink nodes, and write $(j,k) \in \cP_\ell$ if $j$ and $k$ appear as successive elements of $\cP_\ell$ (each $\cP_\ell$ is an ordered set). We do not include the source $\{0\}$ or the sink  $\{T+1\}$ in such paths. In this way a solution $x$ to (\ref{problem:reduced-network-flow}) is associated with a set of flows $f_1,\ldots,f_L\in[0,1]$ on these paths with 
$$x(j,k)=\sum_{i : (j,k) \in \cP_i} f_i.
$$  
By identifying the first element in each path we can also write $x(0,j)$ as a sum; that of $f_i$ over paths with $\{j\}$ as the first element. A similar process gives $x(k,T+1)$.
We write $D(\cP_{\ell})$ for the total distance along path $\cP_{\ell}$, so 
$$
D(\cP_{\ell}) =\sum_{(j,k) \in \cP_{\ell}}d(j,k),
$$ 
and we write $\cN (j) \subseteq [L]$ for the set of paths that include node $j$. Thus, another way to state problem (\ref{problem:reduced-network-flow}) is
\begin{equation}\label{WPF-path}
\maximize_{f_1,\ldots,f_L\in\reals_+} \quad \sum_{t=1}^T \log \biggl({\sum}_{i \in \cN(t)} f_i \biggr) -\lambda \sum_{\ell=1}^L f_{\ell} D(\cP_{\ell}), \quad \subjectTo \quad \sum_{\ell=1}^L f_{\ell}=1. 
\end{equation}

\begin{lemma}\label{lemma:path-derivative}
A feasible solution $x$ to (\ref{problem:reduced-network-flow}), with flow decomposition $f_1,\ldots,f_L$ over the possible paths from source to sink $\cP_1,\ldots,\cP_L$, is optimal if and only if there exists a constant $\pathConstant$, such that, for every path $\cP_{\ell} =\{\ell_1, \ldots, \ell_J\}$, the bound
\begin{equation} \label{equation:path-constant}
 \sum_{j=1}^{J}\biggl({\sum}_{i \in \cN(\ell_j)} f_i\biggr)^{-1}-\lambda D(\cP_{\ell}) \le \pathConstant
\end{equation}
holds, with equality if $f_{\ell} >0$.
\end{lemma}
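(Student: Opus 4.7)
The plan is to treat (\ref{WPF-path}) as a concave maximisation over the simplex and to identify the stated inequality as its standard KKT optimality condition. Since the sum of log terms is concave and the remaining terms are linear, and the feasible set is a simplex, KKT conditions are both necessary and sufficient for optimality—provided the objective is differentiable at the candidate point. The first step is therefore to rule out the boundary behaviour of the logarithm.

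I would establish that every optimal solution satisfies $\sum_{i \in \cN(j)} x_i > 0$ for all $j \in [T]$. This follows because a feasible solution with positive flow through every node exists: placing weight $1/T$ on each of the $T$ paths of the form $0 \to j \to T+1$ is permitted by the constraints of (\ref{problem:reduced-network-flow}), gives $\sum_i x_i = 1$, and has a finite objective value. Hence at any optimum the log terms must also be finite, so $\sum_{i \in \cN(j)} x_i > 0$ for each $j$, and the objective is differentiable there.

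Next I would form the Lagrangian with multiplier $\pathConstant \in \Reals$ for the equality constraint,
\[ \mathcal{L}(x,\pathConstant) = \sum_{j=1}^T \log\biggl(\,\sum_{i \in \cN(j)} x_i\biggr) - \lambda \sum_{i=1}^L x_i D(\cP_i) - \pathConstant\biggl(\,\sum_{i=1}^L x_i - 1\biggr), \]
and compute its partial derivative with respect to $x_i$. For a path $\cP_i = \{i_1,\ldots,i_k\}$, the log-term contribution comes from those $j$ with $i \in \cN(j)$, which are precisely the elements of $\cP_i$, yielding
\[ \frac{\partial \mathcal{L}}{\partial x_i} = \sum_{m=1}^{k} \frac{1}{\sum_{l \in \cN(i_m)} x_l} - \lambda D(\cP_i) - \pathConstant. \]
The KKT conditions for $x_i \geq 0$ require this derivative to be $\leq 0$ with equality whenever $x_i > 0$, which is exactly the stated condition.

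For sufficiency (KKT implies optimal), concavity of the objective gives the result directly; for necessity (optimal implies KKT), Slater's condition is satisfied because the relative interior of the simplex is nonempty, so the multiplier $\pathConstant$ exists. I do not anticipate any substantive obstacle: the only subtlety is the differentiability of the log on the boundary of the non-negative orthant, which is resolved by the first step.
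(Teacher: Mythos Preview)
Your proposal is correct and follows essentially the same approach as the paper: both dualise the simplex constraint and read off the KKT conditions for the concave path-based objective. If anything, your version is slightly more careful than the paper's, since you explicitly justify differentiability at the optimum (by exhibiting a feasible point with finite objective) and invoke Slater's condition for the existence of $\pathConstant$, whereas the paper simply asserts these steps.
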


\begin{proof}
The problem (\ref{WPF-path}) is a convex optimization problem.
Dualising the $\sum_{\ell=1}^L f_{\ell} =1$ constraint, there is a constant 
$\pathConstant$ (which is the associated Lagrange multiplier) such that the optimal solution occurs when
\begin{equation*}
\sum_{t=1}^T \log \biggl({\sum}_{i \in \cN(t)} f_i\biggr) -\lambda  \sum_{\ell=1}^L f_{\ell} D(\cP_{\ell})+\pathConstant \biggl(1-\sum_{\ell=1}^L f_{\ell}\biggr)
\end{equation*}
is maximized over $f_1,\ldots,f_L \ge 0$. Taking derivatives, when $f_{\ell} > 0$ 
\begin{equation*}
 \sum_{j=1}^{J}\biggl({\sum}_{i \in \cN(\ell_j)} f_i\biggr)^{-1}-\lambda D(\cP_{\ell}) - \pathConstant =0,
\end{equation*}
and otherwise when $f_{\ell} = 0$, then the derivative is negative, as required. Sufficiency follows from the standard Karush--Kuhn--Tucker argument.
\end{proof}

For a component $\mathcal{C}$ in a solution to (\ref{problem:reduced-network-flow}) we write the total probability mass for the component~as
$$
M(\mathcal{C})=\sum_{i \in \mathcal{C}}x(0,i).
$$
Due to conservation of flow, this is equal to the sum of terminal probabilities on nodes in this component,  i.e., 
$$M(\mathcal{C})=\sum_{i \in \mathcal{C}}x(i,T+1)=\sum_{i \in \mathcal{C}}\mathbb{P}_T (\{\widehat{\xi}_i\}).$$

Our next result bounds the total probability mass in a component in terms of the number of observations that it includes.

\begin{proposition}\label{proposition:component-mass-upper-bound}
For each component $\cC$ in an optimal solution $x$ to (\ref{problem:reduced-network-flow}), the bounds
\begin{equation*}
   \lvert\tspace\cC\tspace\rvert\Bigl(\pathConstant + \lambda D_{\max}(\cC)\Bigr)^{-1} \leq M(\cC) \leq \lvert\tspace\cC\tspace\rvert\tspace\pathConstant^{-1}
\end{equation*}
hold, where $\pathConstant$ is defined as in (\ref{equation:path-constant}) and $D_{\max}(\cC)\defeq\max_{\ell\in[L]}\{D(\cP_\ell):\cP_\ell \subseteq \cC\}$.
\end{proposition}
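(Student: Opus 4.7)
The plan is to use the optimality conditions from Lemma~\ref{lemma:path-derivative} on each positive-flow path that lies in the component $\cC$, then sum these equalities weighted by the path flows and exploit flow conservation to reduce the left-hand side to $|\cC|$.

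First, write $q_j \defeq \sum_{i\in\cN(j)} x_i$ for the total flow through node $j$; this is exactly the $p_j(j)$ appearing in the log terms of (\ref{problem:reduced-network-flow}) and is positive for every $j\in[T]$ at optimality, since otherwise the objective would be $-\infty$. Let $\cL(\cC)\subset[L]$ denote the set of path indices $i$ with $x_i>0$ and $\cP_i\subset\cC$. The key observation is that any positive-flow path passing through a node $j\in\cC$ must lie entirely in $\cC$, because each of its arcs has positive flow and hence belongs to the graph defining the components. Consequently
\begin{equation*}
\sum_{i\in\cL(\cC)\,:\,j\in\cP_i} x_i \;=\; \sum_{i\in\cN(j)} x_i \;=\; q_j \quad \text{for every } j\in\cC,
\end{equation*}
and also $\sum_{i\in\cL(\cC)} x_i = H(\cC)$, since the total flow through the component equals the flow leaving the source into $\cC$.

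Next, by Lemma~\ref{lemma:path-derivative}, every $i\in\cL(\cC)$ satisfies $\sum_{j\in\cP_i} 1/q_j - \lambda D(\cP_i) = \pathConstant$. Multiplying by $x_i$ and summing over $i\in\cL(\cC)$ gives
\begin{equation*}
\sum_{i\in\cL(\cC)} \sum_{j\in\cP_i} \frac{x_i}{q_j} \;-\; \lambda \sum_{i\in\cL(\cC)} x_i D(\cP_i) \;=\; \pathConstant \, H(\cC).
\end{equation*}
Swapping the order of summation on the first term and using $\sum_{i\in\cL(\cC)\,:\,j\in\cP_i} x_i = q_j$ collapses it to $\sum_{j\in\cC} 1 = |\cC|$, so
\begin{equation*}
|\cC| \;=\; \pathConstant\, H(\cC) \;+\; \lambda \sum_{i\in\cL(\cC)} x_i D(\cP_i).
\end{equation*}

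Finally, bound the residual distance term. Dropping it (since it is nonnegative) yields $|\cC| \geq \pathConstant H(\cC)$, which is the upper bound on $H(\cC)$. Replacing each $D(\cP_i)$ by $D_{\max}(\cC)$ gives $\sum_{i\in\cL(\cC)} x_i D(\cP_i) \leq D_{\max}(\cC) H(\cC)$, so $|\cC| \leq (\pathConstant + \lambda D_{\max}(\cC)) H(\cC)$, which rearranges to the lower bound. The only subtle point in the argument is the bookkeeping identity $\sum_{i\in\cL(\cC)\,:\,j\in\cP_i} x_i = q_j$, which I expect to be the main step to justify carefully; it rests on the fact that positive-flow paths never cross between components.
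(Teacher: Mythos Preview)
Your proof is correct and follows essentially the same route as the paper's: apply the equality case of Lemma~\ref{lemma:path-derivative} to each positive-flow path in $\cC$, weight by $x_i$ and sum, swap the order of summation to collapse the double sum to $|\cC|$, and then bound the distance term above by $D_{\max}(\cC)H(\cC)$ and below by $0$. Your explicit justification that positive-flow paths cannot straddle two components (and hence that $\sum_{i\in\cL(\cC):j\in\cP_i}x_i=q_j$) is exactly the step the paper summarises as ``no path flows include elements from more than one component.''
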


\begin{proof}

We proceed by taking an optimal solution $x$ to the network-flow problem (\ref{problem:reduced-network-flow})  with a flow decomposition $f_1,\ldots,f_L\in[0,1]$ on the paths $\cP_1,\ldots,\cP_L\subseteq[T]$ between the source and sink. By the definition of  components, it is clear that no path flows include elements from more than~one~component.

For a given component $\cC$ we consider paths $\cQ_1,\ldots,\cQ_K$ in 
$\cC$ with flows $f_1,\ldots,f_K >0$. The total probability mass in this component is simply the sum of the flows: $ M(\cC) = \sum_{k=1}^K f_k$,
and thus we rewrite the result of Lemma \ref{lemma:path-derivative} as 
\begin{equation*}
\sum_{j \in \cQ_k}\biggl({\sum}_{i : j \in \cQ_i} f_i\biggr)^{-1}-\lambda D(\cQ_k) = \pathConstant,
\end{equation*}
for each path $\cQ_k\in \cC$. Hence, 
\begin{equation} \label{eqn:boundA}
 \sum_{k=1}^K \sum_{j \in \cQ_k}f_k\biggl({\sum}_{i : j \in \cQ_i} f_i\biggr)^{-1} = \sum_{k=1}^K f_k \Bigl(\pathConstant+\lambda D(\cQ_k)\Bigr).
\end{equation}
Now, the left-hand side of (\ref{eqn:boundA}) is simply the number of observations in the component, as
\begin{equation*}
\sum_{k=1}^K \sum_{j \in \cQ_k}f_k\biggl({\sum}_{i : j \in \cQ_i} f_i\biggr)^{-1} = \sum_{j \in \cC} \sum_{k : j \in \cQ_k}\biggl({\sum}_{i : j \in \cQ_i} f_i\biggr)^{-1} f_k
 = \sum_{j \in \cC} \biggl({\sum}_{i : j \in \cQ_i} f_i\biggr)^{-1} \sum_{k : j \in \cQ_k}f_k = \lvert\tspace\cC\tspace\rvert.
\end{equation*}
Then, since $\pathConstant \le \pathConstant+\lambda D(\cQ_k) \le \pathConstant + \lambda D_{\max}(\cC)$, the result follows from (\ref{eqn:boundA}).
\end{proof}

Suppose that in an optimal solution $\{i\}$ is a component with only one element (a \emph{singleton component}), which therefore consists of a single path $\cP_\ell = \{i\}$. The only flow through $i$ is from $0$ to $i$ to $T+1$. Since $D(\cP_\ell)=0$, Proposition~\ref{proposition:component-mass-upper-bound} implies $ \Prob_T(\{\widehat{\xi}_i\}) = 1/\pathConstant$. (Equality holds since $D_{\max}(\cC)=0$.)  Proposition~\ref{proposition:component-mass-upper-bound} further establishes that the total probability mass on any component $\cC$ is at most $\lvert\tspace\cC\tspace\rvert \tspace \Prob_T(\{\widehat{\xi}_i\})$, i.e.,  $\lvert\tspace\cC\tspace\rvert$
times as large as that for the singleton component. We typically find that the observations which end up as singleton components are far away from any of the other observations and thus could be described as outliers. For instance, in Example~\ref{example:small-network} it is the observation $7.85$ that ends up as a singleton component.  On the other hand, observations which are part of the same component are typically close together, and as a consequence of the previous reasoning, end up with a lower total probability mass than if they had been outliers. In effect, the WPF method gives greater mass to outliers.

This relative upweighting of outliers is an unusual feature of the WPF method. The intuition is that encountering a set of observations which are close together suggests a single point of mass in the underlying distribution has shifted over time, whereas there is less evidence to suggest that the point of mass responsible for the outlying observation has shifted.

\subsection{Do Older Observations Have Smaller Assigned Probabilities?}
We expect that observations further in the past will generally have smaller probability in the final distribution. This is true only in a weak sense: we show that if the sequence of observations is changed so that a particular observation is made at an earlier point in time, then the probability assigned to that observation in the terminal WPF estimate will either stay the same or~be~reduced, provided that none of the observations which as a result are shifted one time period later are in the same component.

Throughout this subsection we assume the conditions of Theorem~\ref{theorem:unique-solution} ensuring the optimal solution is always unique. We will need some additional notation. Consider an unordered set of observations $\widehat{\Xi} = \{\widehat{\xi}_1, \ldots \widehat{\xi}_T \}$  and let $\sigma: \{1,\ldots , T \}  \rightarrow \widehat{\Xi}$ be a  sequence giving the order in which these occur. Given a particular sequence $\sigma$ we are interested in the optimal terminal distribution $\mathbb{P}_T^{\star}$ of probabilities attached to each of these observations under WPF; we write the optimal terminal probability assigned to observation $\widehat{\xi}_{t}$ under sequence $\sigma$ as~$\Prob^{\star}_T(\{\widehat{\xi}_t\};\sigma)$.

Suppose that we are given a sequence $\sigma$ for the observations and a pair of times $s$ and $s^{\prime}$, with $s^{\prime}<s$, 
we define a new sequence of observations $\sigma^{\prime}$ that moves the observation at time $s$ to the earlier time $s^{\prime}$, at the same time as shifting all the observations made at intermediate times one period later; so $\sigma^{\prime}(s^{\prime})=\sigma(s)$ and $\sigma^{\prime}(t)=\sigma(t-1)$, for $t=s^{\prime}+1, \ldots ,s$, with the other values of $\sigma^{\prime}$ the same as for~$\sigma$. The result below shows that the probability assigned to the observation~$\sigma(s)$ either decreases or stays the same if it is moved earlier in the sequence, provided that none of the observations between period $s^{\prime}$ and period $s-1$ are in the same component as the observation at $s$ in the WPF solution.

\begin{theorem} \label{move-backwards}
If $\sigma(s)=\widehat{\xi}_s$, and the observations  $\sigma(s^{\prime}), \sigma(s^{\prime}+1) \ldots, \sigma(s-1)$ are in different components to $\widehat{\xi}_s$ in the optimal WPF solution for the sequence of observations $\sigma$, then   $\Prob^{\star}_T(\{\widehat{\xi}_s\};\sigma) \ge \Prob^{\star}_T(\{\widehat{\xi}_s\};\sigma^{\prime})$.
\end{theorem}

\begin{proof}
We will reformulate the problem (\ref{problem:reduced-network-flow}) by identifying the variables~$x(i,j)$ with the observations themselves rather than with the order of observations, so we work with variables~$x(\widehat{\xi}_i,\widehat{\xi}_j)$. Henceforth we label the observations using the unswapped ordering $\sigma$, so that $\widehat{\xi}_i = \sigma(i)$ for $i=s^{\prime},\ldots,s$. We will also need an extended set of observations which is $\widehat{\Xi}$ together with additional dummy observations $\widehat{\xi}_0$ and $\widehat{\xi}_{T+1}$ for the source and sink nodes. (Note that these are unaffected by the switch in ordering from $\sigma$ to $\sigma^{\prime}$, so $\hat{\xi}_0 = \sigma(0) = \sigma^{\prime}(0)$ and $\hat{\xi}_{T+1} = \sigma(T+1) = \sigma^{\prime}(T+1)$.) The problem (\ref{problem:reduced-network-flow}) becomes:
\begin{alignat}{4}\label{problem:sequence-network-problem}
    & \kern-0.07em \maximize_{x} &&\quad \pi(x)\defeq
    \sum_{j=1}^{T} \log\biggl(\ttspace\sum_{i=0}^T x(\widehat{\xi}_i,\widehat{\xi}_j)\biggr) - \lambda \sum_{j=1}^{T}\sum_{i=1}^{T} d(\widehat{\xi}_i,\widehat{\xi}_j)x(\widehat{\xi}_i,\widehat{\xi}_j), \\
    & \text{subject to} &&\quad \sum_{j=1}^{T} x(\widehat{\xi}_0,\widehat{\xi}_j) = 1, \notag\\
    & &&\quad \sum_{i=0}^T x(\widehat{\xi}_i,\widehat{\xi}_j) = \sum_{k=1}^{T+1} x(\widehat{\xi}_j,\widehat{\xi}_k), \qquad j\in[T],  \notag\\
    & &&\quad \vphantom{\sum^T_1} x(\widehat{\xi}_i,\widehat{\xi}_j) = 0~~\text{if}~~\sigma^{-1}(\widehat{\xi}_j) < \sigma^{-1}(\widehat{\xi}_i), \qquad i\in 0\cup[T], \; j\in [T+1], \notag \\
     & &&\quad x(\widehat{\xi}_0,\widehat{\xi}_{T+1}) = 0, \vphantom{\sum_1}\qquad \notag\\
    & &&\quad x(\widehat{\xi}_i,\widehat{\xi}_j) \in \mathbb{R}_+, \qquad i\in 0\cup[T], \; j\in [T+1]. \notag
\end{alignat}
The switch
in the order of the observations from $\sigma$ to $\sigma^{\prime}$ leaves the problem \eqref{problem:sequence-network-problem} unchanged except for the constraints on which variables $x(\widehat{\xi}_i,\widehat{\xi}_j)$ are equal to zero: the constraints $x(\widehat{\xi}_s,\widehat{\xi}_k)=0$, $k=s^{\prime},\ldots,s-1$, being dropped and the new constraints $x(\widehat{\xi}_k,\widehat{\xi}_s)=0$, $k=s^{\prime},\ldots,s-1$, being added. Towards obtaining a contradiction, we proceed by deriving a chain of inequalities relating optimal solutions to the unswapped and swapped versions of \eqref{problem:sequence-network-problem}.

Let $x^\star$ be an optimal solution for the original ordering $\sigma$ and $x^{\prime\star}$ an optimal solution for the swapped ordering $\sigma^{\prime}$. By the hypothesis that in the optimal solution for $\sigma$ the observations~$\widehat\xi_{s^{\prime}},\widehat\xi_{s^{\prime}+1},\dots,\widehat\xi_{s-1}$ lie in different components to $\widehat\xi_{s}$, we have that
\[
x^\star(\widehat\xi_k,\widehat\xi_s)=0,\qquad k=s^{\prime},\dots,s-1.
\]
Hence, $x^\star$ satisfies the feasibility constraints of the problem with the swapped ordering~$\sigma^{\prime}$. 

Define $v:=x^{\prime\star}-x^\star$. Note that as a difference of two feasible flows, $v$ satisfies conservation of flow at every node, and thus $v$ is a circulation that may be decomposed into directed cycles~\parencite{ahuja1993network}. (Negative flows are treated as positive flows on the reverse arcs.) 
Let $v_A$ be the sum of flow cycles that include the arc~$(\widehat\xi_s,\widehat\xi_{T+1})$. By construction $v_A$ has conservation of flow at every node and only has outflow from $\widehat\xi_s$ to $\widehat\xi_{T+1}$. Moreover, on all arcs not involving $\widehat\xi_s$, in each coordinate $x^\star+v_A$ lies in the interval between  $x^\star$ and $x^{\prime\star}$, and thus $x^\star+v_A$ is feasible for the problem with the unswapped ordering $\sigma$.

Further, let $v_B \defeq v-v_A$ be the sum of the remaining flow cycles. Since, for each $k=s^{\prime}, \ldots, s-1$, we have that $x^{\prime\star}(\widehat{\xi}_k,\widehat{\xi}_s) = 0$ due to the constraints, and that $x^{\star}(\widehat{\xi}_k,\widehat{\xi}_s) = 0$ due to the hypothesis, the difference 
$v(\widehat{\xi}_k,\widehat{\xi}_s)$ is also equal to $0$. It follows that $v_B(\widehat{\xi}_k,\widehat{\xi}_s)=0$ for $k=s^{\prime}, \ldots, s-1$. For arcs not involving $\widehat\xi_s$, in each coordinate $x^\star+v_B$ lies in the interval between  $x^\star$ and $x^{\prime\star}$, and so $x^{\star}+v_B$ is feasible for the problem with the swapped ordering~$\sigma^{\prime}$. From optimality for the respective problems we therefore have
\begin{equation}\label{equation:from-optimality}
\pi(x^\star)\ge \pi(x^\star+v_A)
\qquad\text{and}\qquad
\pi(x^{\prime\star})\ge \pi(x^\star+v_B).
\end{equation}

For the objective function $\pi$ in \eqref{problem:sequence-network-problem}, consider the value $\pi(x^\star)$ and the perturbed terms $\pi(x^\star+v_A)$, $\pi(x^\star+v_B)$, and $\pi(x^\star+v_A+v_B)$. Now $\sum_{i=0}^T v_A(\widehat\xi_i,\widehat\xi_t)$ and $\sum_{i=0}^T v_B(\widehat\xi_i,\widehat\xi_t)$ are both between zero and  $\sum_{i=0}^T \bigl( x^{\prime\star}(\widehat\xi_i,\widehat\xi_t)-x^{\star}(\widehat\xi_i,\widehat\xi_t) \bigr)$ and hence cannot have opposite signs for any $t=1,\ldots,T$. Thus, both perturbations $v_A$ and $v_B$ act in the same direction on each additively separable concave log term appearing in $\pi$, and therefore satisfy a diminishing returns property. Together with the linearity of the distance terms, it follows that
\[
\pi(x^\star+v_A)-\pi(x^\star)
\ge 
\pi(x^\star+v_A+v_B)-\pi(x^\star+v_B).
\]
Combining this with the inequalities from \eqref{equation:from-optimality} and using $x^{\prime\star} = x^{\star} + v_A + v_B$ gives
\[
0 \ge \pi(x^\star+v_A)-\pi(x^\star)
\ge \pi(x^\star+v_A+v_B)-\pi(x^\star+v_B)
\ge 0,
\]
so the inequalities are tight and $\pi(x^\star+v_A)=\pi(x^\star)$. 

We now suppose that $v(\widehat{\xi}_s,\widehat{\xi}_{T+1}) >0$ and will find a contradiction. By construction, this implies that $v_A(\widehat{\xi}_s,\widehat{\xi}_{T+1}) > 0$. Consider the convex combination $\frac{1}{2}x^\star + \frac{1}{2}(x^\star+v_A) = x^\star+ \frac{1}{2}v_A$  which is also feasible for the problem with the unswapped sequence $\sigma$. Conservation of flow and strict concavity of the objective function $\pi$ in $\sum_{i=0}^T x(\widehat{\xi}_i,\widehat{\xi_s})$ (which appears in the $s$-th log term) implies that $\pi(x^\star+\frac{1}{2}v_A) >\pi(x^\star+v_A)=\pi(x^\star)$, contradicting the optimality of $x^\star$. This establishes $v(\widehat\xi_s,\widehat\xi_{T+1}) \leq 0$, i.e., that \(x^{\prime\star}(\widehat{\xi}_j,\widehat{\xi}_{T+1}) \le x^\star(\widehat{\xi}_j,\widehat{\xi}_{T+1})\).
\end{proof}

At first sight it seems that we can use this result to show that if a whole component is moved earlier in time, then in an optimal solution none of those observations can have their assigned terminal probability increased (by moving observations one at a time, starting from the earliest in the component). However, this argument can fail, since each move in this process may change the set of observations within the component.

\section{Numerical Results}\label{section:numerical-results}
\noindent In this section we evaluate the performance of the WPF method. We begin by demonstrating that the WPF method has value over other weighting approaches using a synthetic newsvendor problem. This setup allows us to estimate expected performances with high statistical precision. Our findings show that the dimensionality and complexity of the underlying distributions significantly influence how beneficial (or detrimental) WPF can be. We then test performance on price-forecasting and portfolio-optimization applications. Here 
observations are taken from multidimensional real-world data sets collected over long timescales, and we use a parameter-tuning scheme which could feasibly be employed in practice. 

As benchmarks, we compare to the performance of SAA, windowing, and simple exponential smoothing. Like WPF, these all define weighted empirical distributions on the 
observations. SAA sets 
\begin{equation*}
    p_T(1) =1/T,\;\ldots,\;p_T(T) = 1/T,
\end{equation*} 
windowing for window size $s \in [T]$ sets 
\begin{equation*}
    p_T(T) = 1/s,\;\ldots,\;p_T(T-s+1) = 1/s, \quad p_T(T-s) = 0,\;\ldots,\;p_T(1) = 0,
\end{equation*}
and (simple exponential) smoothing for decay rate $\alpha\in [0,1]$ sets
\begin{equation*}
    p_T(T)=\bar{\alpha},\;p_T(T-1)=\bar{\alpha}(1-\alpha),\;p_T(T-2)=\bar{\alpha}(1-\alpha)^2,\;\ldots,\;p_T(1)=\bar{\alpha}(1-\alpha)^T.
\end{equation*}
The value $\bar{\alpha}=\alpha/\bigl(1-(1-\alpha)^{T+1}\bigr)$ is chosen so that the probabilities sum to $1$.\footnote{For $\alpha=0$, we take the limiting form as $\alpha\to0$, corresponding to $p_T(T)=\cdots=p_T(1)=1/T$.}

Our evaluations of the WPF method use the exponential-conic reformulation of (\ref{problem:reduced-network-flow}), implemented in \texttt{Julia} \parencite{Julia} with \texttt{JuMP.jl} \parencite{JuMP} and \texttt{MathOptInterface.jl} \parencite{mathoptinterface}, and solved with \texttt{COPT} \parencite{COPT}. Distances are measured using the metrics induced by the $1$-, $2$-, and $\infty$-norms, which we refer to by $L_1$, $L_2$, and $L_{\infty}$, respectively.

\subsection{Multi-Modal and Multi-Dimensional Newsvendor}
Consider a newsvendor dealing in $m$ goods, having underage and overage cost vectors~$\costUnder$ and~${\costOver\in\reals_+^m}$, respectively, supplying a vector of random demands $\rxi\in\reals^m\sim\Prob$. The vector of optimal order quantities for each good solves 
\begin{equation*}
    \minimize_{x\in\Xi}~~\Expt_{\Prob}\Bigl[ \costUnder^\transpose (\rxi-x)_+ + \costOver ^\transpose (x-\rxi)_+ \Bigr].
\end{equation*}
For the nonstationary demand process, we use a mixture of $n$ multivariate normal distributions with modes that evolve independently over time. At each time $t\in[T]$ we suppose that 
\begin{equation*}
    \rxi_t\in\reals^m \sim \frac{1}{n} \sum_{i=1}^n \text{Normal}\bigl(\mu_t(i),\mathop{\mathrm{Diag}}(\sigma^2,\ldots,\sigma^2)\bigr),
\end{equation*}
where $\mu_t(i)\in\reals^m$ is the mean vector of mode $i$ at time $t$, and $\mathop{\mathrm{Diag}}(\sigma^2,\ldots,\sigma^2)$ is an $m$-by-$m$ diagonal matrix with entries $\sigma^2$. For each mode $i$ we further suppose that 
\begin{equation*}
    \mu_{t+1}(i) = \mu_t(i) + \repsilon_t(i) ~~\where~~\repsilon_t(i) \in\reals^m \sim \text{Normal}\bigl((0,\ldots,0),\mathop{\mathrm{Diag}}(\rho^2,\ldots,\rho^2)\bigr).
\end{equation*}
In this way the scalar $\rho$ parametrises the extent of nonstationarity. For simplicity we use isotropic normal distributions and do not vary the covariance matrices over time.

In the case that there is a single mode then we have a problem for which the Kalman filter is appropriate; there is a random walk carried out by a state (the mean $\mu_t$), with observations occurring with a normal measurement error. The usual Kalman filter recursions then give an optimal estimate for the current mean. This can be shown, after some initial transient behaviour, to be given by an exponentially smoothed weighting of the observations. Thus, it is not surprising that the smoothing performs very well when there is a single mode, as is shown in Table~\ref{table: varying m and n} below. There is, however, a significant effect that comes into play when the distribution, rather than the mean, is needed for the stochastic optimization. When taking a weighted empirical distribution the mean will be essentially the optimal estimator for the mean of the underlying distribution, but the variance of the weighted empirical distribution will be consistently larger than the variance of the true underlying distribution.

To illustrate the behaviour of the WPF method, Figure~\ref{figure:radio-pulsar} shows an instance of what happens when there are two modes and we work in a single dimension. This is for one set of realisations of the process and a particular shift penalty.
\begin{figure}[H]
    \centering
    \hspace*{-0.4cm}\includegraphics[width=343pt]{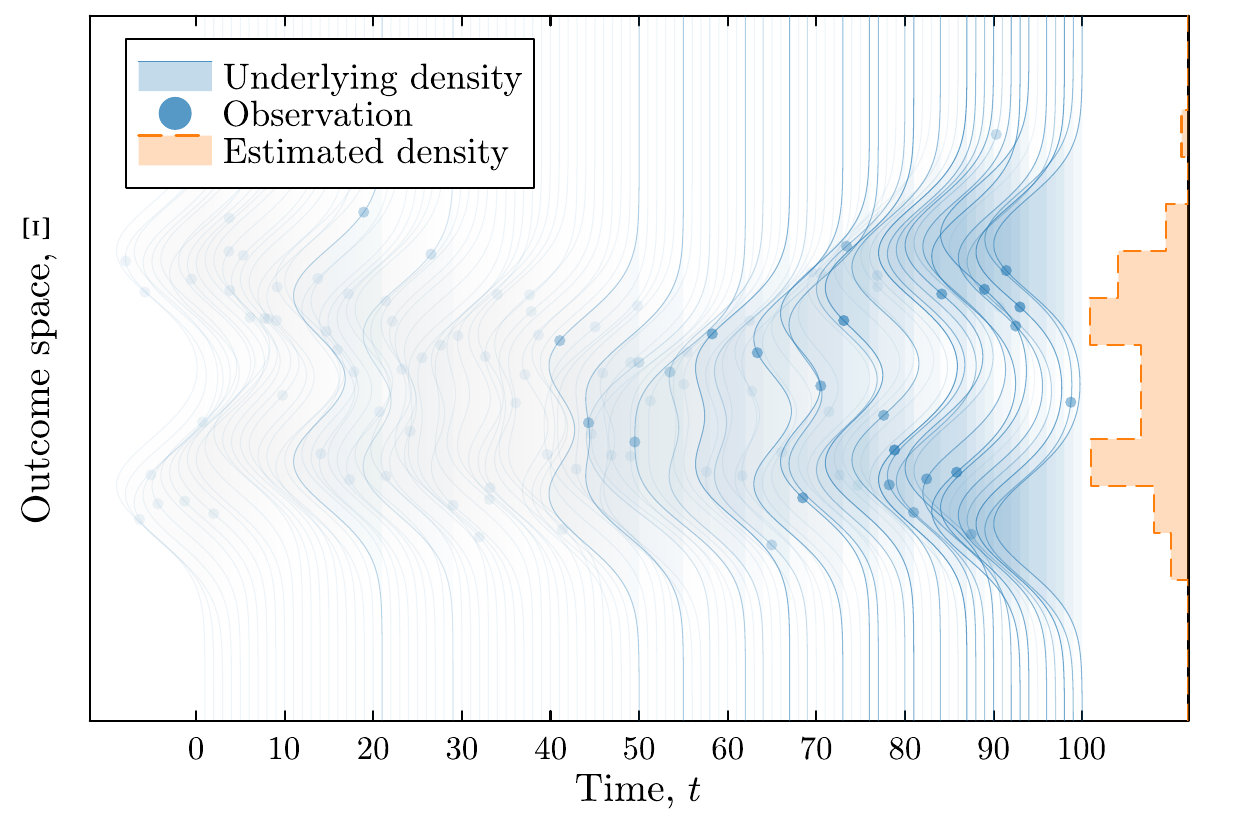}
    \caption{\textbf{Underlying Distributions, Observations, and Terminal WPF Estimate.} Opacities reflect probabilities in the terminal estimate. Here $\sigma = 20$, $\rho = 4$, and $\lambda = 10$.}
    \label{figure:radio-pulsar}
\end{figure}

Next we consider an example where we use different estimation methods to determine the optimal newsvendor order quantities given observations from the nonstationary process~$\rxi_t$. In all our examples we have an initial set of modes given by $\mu_1(i) = i(100,\ldots,100)$, $i\in[n]$, distribution parameters $\sigma = 20$ and $\rho = 15$, and cost vectors $\costUnder = (4,\ldots,4)$ and $\costOver = (1,\ldots,1)$. We also use a history length of $T=100$. In comparing different methods we take the average performance over $1000$ realisations and tune the values of the relevant parameters across the set of possible values shown in Table \ref{table:parameter-ranges}. For scalars $a \leq b$ and a positive integer $r$, we use the notation $\linRange(a,b;r)$ for the set of $r$ values forming an equally spaced sequence between (inclusive of) $a$ and $b$. Further, for scalars $a \leq b \in (0,\infty)$ and a positive integer $r$, we use the notation $\logRange(a,b;r)$ for the set of $r$ values forming a geometric sequence between (inclusive of) $a$ and $b$. This turns out to be the natural choice for parametrising windowing and smoothing.

\begin{table}[H]
\centering
\caption{\textbf{Newsvendor-Ordering Parameter Ranges}}
\label{table:parameter-ranges}
\centerline{\begin{tabular}{r c}
\toprule
\midrule
\addlinespace
    Window size & $s \in \lceil\logRange(1,100;30)\rceil$ \\
    Decay rate & $\alpha \in \{0\}\cup\logRange(10^{-4},10^{0};30)$ \\
    Shift penalty & $ \lambda \in \{0\}\bigcup_{i=-3}^{-1} \linRange(10^i,10^{i+1};10)\cup\{\infty\}$\\
\addlinespace
\bottomrule
\end{tabular}}
\end{table}

First we consider an example with two goods and a demand distribution with three modes. The results are shown in Table \ref{table: WPF comparisons}. We use three different distance norms from which to derive the Wasserstein distances. In this case all do well, with a distance based on the $L_{\infty}$ norm doing best. In the other numerical examples we discuss below, it turns out that switching from e.g.~$L_1$~to~$L_2$ can make a big difference, but here it does not matter so much.

We find that both windowing and smoothing give a significant improvement over the basic SAA approach that ignores the nonstationarity. But all versions of the WPF approach are better than these other methods, with cost differences of more than $\approx 3$\,\% between WPF and Smoothing.

\begin{table}[H]
\caption{\textbf{Newsvendor-Ordering Performance.} Standard errors ($\pm$ terms) reported in `Difference from SAA' row computed using common random numbers.}\label{table: WPF comparisons}
\centering
\centerline{\begin{tabular}{r cccccc}
\toprule
\midrule     
      & \multirow{2.25}{*}{\hphantom{+}SAA} & \multirow{2.25}{*}{\hphantom{+}Windowing} & \multirow{2.25}{*}{\hphantom{+}Smoothing} & \multicolumn{3}{c}{WPF  with metric} \\\cmidrule(lr){5-7}
      & & & & $\hphantom{+}L_1$ & $\hphantom{+}L_2$ & $\hphantom{+}L_\infty$ \\
\midrule
\addlinespace

\makecell[r]{Ex-post optimal\\expected cost} 
        & $\hphantom{+}427.2$
        & $\hphantom{+}385.6$
        & $\hphantom{+}377.6$
        & $\hphantom{+}368.8$
        & $\hphantom{+}368.0$
        & $\hphantom{+}368.1$ \\
\addlinespace
\midrule
\addlinespace
\makecell[r]{Difference\\from SAA (\%)}
        & $ $
        & $-9.7 \pm 0.5$
        & $-11.6 \pm 0.5$
        & $-13.7 \pm 0.5$
        & $-13.9 \pm 0.5$
        & $-13.8 \pm 0.5$ \\
        
\addlinespace
\bottomrule
\end{tabular}}
\end{table}

Next we consider a range of different dimensions $m$ and number of modes $n$ for the problem. The results are shown in Table~\ref{table: varying m and n}. We can see that good performance of the WPF method is achieved when there are a greater number of modes and the dimension is greater than one. Roughly speaking, the more complicated the nonstationary behaviour the better WPF will perform in comparison to other methods.

\begin{table}[H]
\centering
\caption{\textbf{Percentage Cost Difference from Smoothing for WPF with $\bm{L_1}$ Metric}}
\label{table: varying m and n}
\begin{tabular}{r ccccc}
\toprule
\midrule
    \multirow{2.25}{*}{\makecell{Dimensions}} & \multicolumn{4}{c}{Underlying modes}\\\cmidrule(lr){2-5}
     & $\hphantom{+}1$ & $\hphantom{+}2$ & $\hphantom{+}3$ & $\hphantom{+}4$ \\
\midrule
\addlinespace

    $1$ & $\hphantom{+}12.8 \pm 1.3$ & $\hphantom{+}0.2 \pm 0.7$ & $-0.9 \pm 0.5$ & $\hphantom{+}0.5 \pm 0.3$ \\
    $2$ & $\hphantom{+}11.7 \pm 0.9$ & $-1.0 \pm 0.6$ & $-2.3 \pm 0.3$ & $-1.0 \pm 0.2$ \\
    $3$ & $\hphantom{+}11.4 \pm 0.8$ & $-1.4 \pm 0.6$ & $-3.8 \pm 0.3$ & $-1.3 \pm 0.2$ \\
    $4$ & $\hphantom{+}11.3 \pm 0.7$ & $-2.4 \pm 0.6$ & $-3.7 \pm 0.3$ & $-1.7 \pm 0.2$ \\
    
\addlinespace
\bottomrule
\end{tabular}
\end{table}

\subsection{Price Forecasting}
In the remainder of this section we adopt a training-and-testing approach to evaluate performance, splitting time-series data into a training set (the first $70\,\%$ of the data) and a testing set (the remaining $30\,\%$ of the data). In the training phase decisions are not made during a warm-up period while initial information (observations) are collected. After this, decisions are made sequentially, incurring costs based on the subsequent observation from within the training set. These costs inform parameter tuning in the testing phase, where at each time step the method selects the parameter that minimizes the total cost over a fixed parameter tuning window.

In the testing phase (which uses the testing set), decisions continue to be made using all prior observations, with parameters updated dynamically based on performance in the parameter tuning window. This window introduces a trade-off: a longer window yields more stable parameter estimates, while a shorter window responds more quickly to changes in time-series dynamics. In our evaluations we found that a two year tuning window worked well for WPF, and that varying this window had little effect on the benchmarks. Note that in this training-and-testing set up, decision making and parameter tuning rely solely on data collected in the past, i.e., the tests are consistent with how this method could be applied in practice \parencite{tashman-2000-out-of-sample-forecasting}.

The international market prices of dairy commodities are influenced by factors such as global economic conditions and consumer demand, both of which evolve over time. Forecasts of such prices inform the operational decision making of dairy companies around the world. Figure~\ref{figure:dairy-prices-data} graphs fourteen years of monthly Global Dairy Trade (GDT) prices for Anhydrous Milk Fat (AMF), Butter (BUT), Butter Milk Powder (BMP), Skim Milk Powder (SMP), and Whole Milk Powder (WMP). The GDT price is the average per-unit price of each product sold at international commodity auctions in a given month.
\begin{figure}[H]
    \centerline{
    \hspace*{1.6cm}\includegraphics[width=399pt]{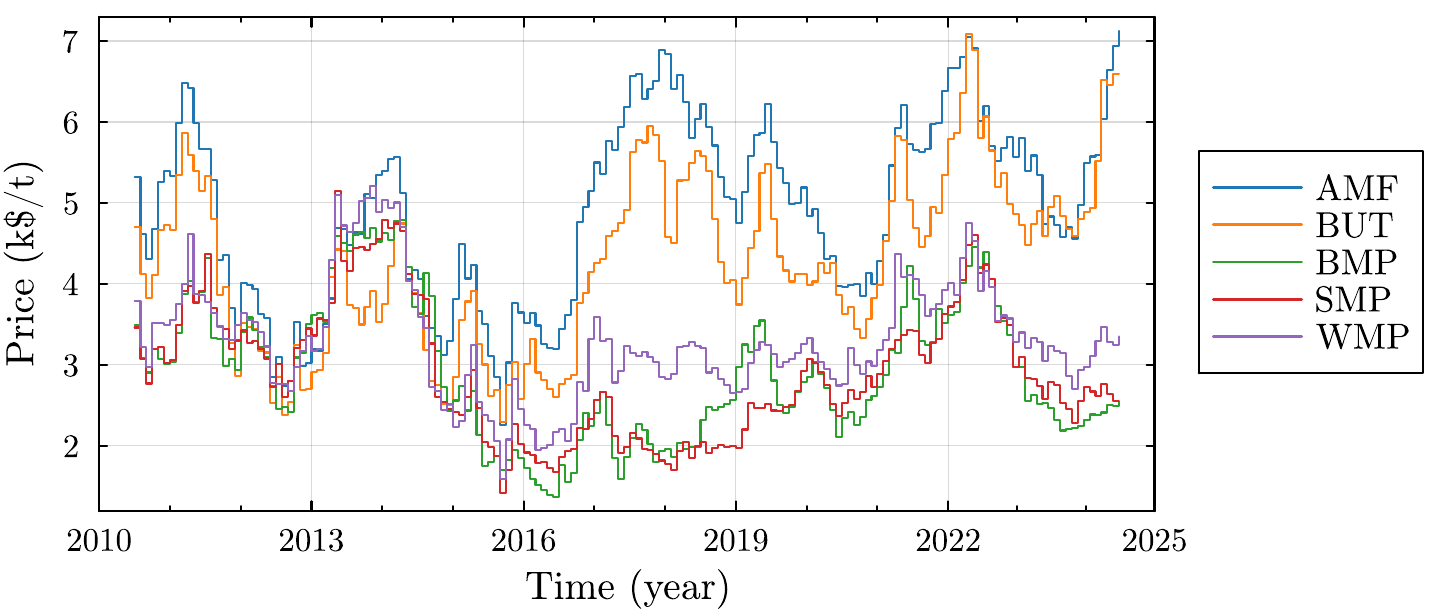}}
    \caption{\textbf{Historical GDT Prices.} Monthly observations; June 2010--May 2024.\protect\footnotemark}
    \label{figure:dairy-prices-data}
\end{figure}
\footnotetext{Data retrieved from \parencite{dairy-forecasting}.}
\noindent Figure~\ref{figure:dairy-prices-data} shows that the GDT prices of each product are correlated. In particular, the prices of the fat products (AMF and BUT) and the milk powders (BMP, SMP, and WMP) are coupled throughout the fourteen-year period, with a change in behaviour around $2016$.

To forecast future GDT prices, we use a weighted linear regression on the previous log prices. For historical log-prices~$\ell_1,\ldots,\ell_T \in \reals^m$ observed at times $t=1,\ldots,T$, assigning the pairs~$(\ell_1,\ell_2),\ldots,(\ell_{T-1},\ell_T)$ probabilities~$p_1,\ldots,p_{T-1}$, we forecast the log price at time~$T+1$ as~$\mu^{\star} + A^{\star} \ell_T$, where $\mu^{\star}$ and $A^{\star}$ solve
\begin{equation*}
    \minimize_{\mu\in\reals^{m},A\in\reals^{m\times m}} ~~ \sum_{t=1}^{T-1} p_t\bigl\lVert\ell_{t+1}-(\mu+A\ell_t)\bigr\rVert^2_2.
\end{equation*}
(Note that this has an analytic solution; see, e.g., \parencite[Section~11.5]{seber-lee:Linear-Regression-Analysis}.) When the next log-price~$\ell_{T+1}$ is realised, we incur the cost $\bigl\lVert\ell_{T+1}-(\mu^{\star}+A^{\star}\ell_T)\bigr\rVert^2_2$. The weighted regression approach here allows adaptation to shifting price dynamics.

Table~\ref{table:price-forecasting-parameter-ranges} presents the parameter ranges used when tuning, and Table~\ref{table:dairy-prices-test} presents the price-forecasting performance of the WPF method and the benchmarks.

\begin{table}[H]
\centering
\caption{\textbf{Price-Forecasting Parameter Ranges}}
\label{table:price-forecasting-parameter-ranges}
\centerline{\begin{tabular}{r c}
\toprule
\midrule
\addlinespace
    Window size & $s \in \lceil\logRange(10,14\cdot 12;30)\rceil$\\
    Decay rate & $\alpha \in \{0\}\cup\logRange(10^{-4},0.9\cdot 10^{0};30)$ \\
    Shift penalty & $\lambda \in \bigcup_{i=1}^3 \linRange(10^i,10^{i+1};10)\cup\{\infty\}$\\
\addlinespace
\bottomrule
\end{tabular}}
\end{table}

\begin{table}[H]
\caption{\textbf{Price-Forecasting Performance.} Standard errors ($\pm$ terms) reported in `Difference from SAA' row computed using common random numbers.}\label{table:dairy-prices-test}
\centering
\centerline{\begin{tabular}{r cccccc}
\toprule
\midrule     
      & \multirow{2.25}{*}{\hphantom{+} SAA} & \multirow{2.25}{*}{\hphantom{+} Windowing} & \multirow{2.25}{*}{\hphantom{+} Smoothing} & \multicolumn{3}{c}{WPF with metric} \\\cmidrule(lr){5-7}
      & & & & $\hphantom{+}L_1$ & $\hphantom{+}L_2$ & $\hphantom{+}L_\infty$ \\
\midrule
\addlinespace
\makecell[r]{Average\\testing cost} 
        & $\hphantom{+}0.0261$
        & $\hphantom{+}0.0277$
        & $\hphantom{+}0.0267$
        & $\hphantom{+}0.0242$
        & $\hphantom{+}0.0280$
        & $\hphantom{+}0.0251$ \\
\addlinespace
\midrule
\addlinespace
\makecell[r]{Difference\\from SAA (\%)}
        & $ $
        & $\hphantom{+}5.8\pm 5.7$
        & $\hphantom{+}2.1\pm 10.6$
        & $-7.5\pm 5.7$
        & $\hphantom{+}7.0\pm 4.5$
        & $-3.9\pm 8.3$ \\
\addlinespace
\bottomrule
\end{tabular}}
\end{table}
\noindent Table~\ref{table:dairy-prices-test} shows that the WPF method with the $L_1$ metric provides $\approx 8\,\%$ better forecasts than SAA, while windowing and smoothing perform similarly to SAA. 

Figure~\ref{figure:dairy-prices-WPF-parameter-costs} graphs average costs of the WPF method for different shift penalties. There is a large decrease in costs as the penalty approaches an intermediate value and the solution begins to utilise the information contained in historical observations, and then a gradual rise and stabilisation in costs as the solution approaches SAA.
\begin{figure}[H]
    \centering
    \hspace{-1.4cm}\includegraphics[width=281pt]{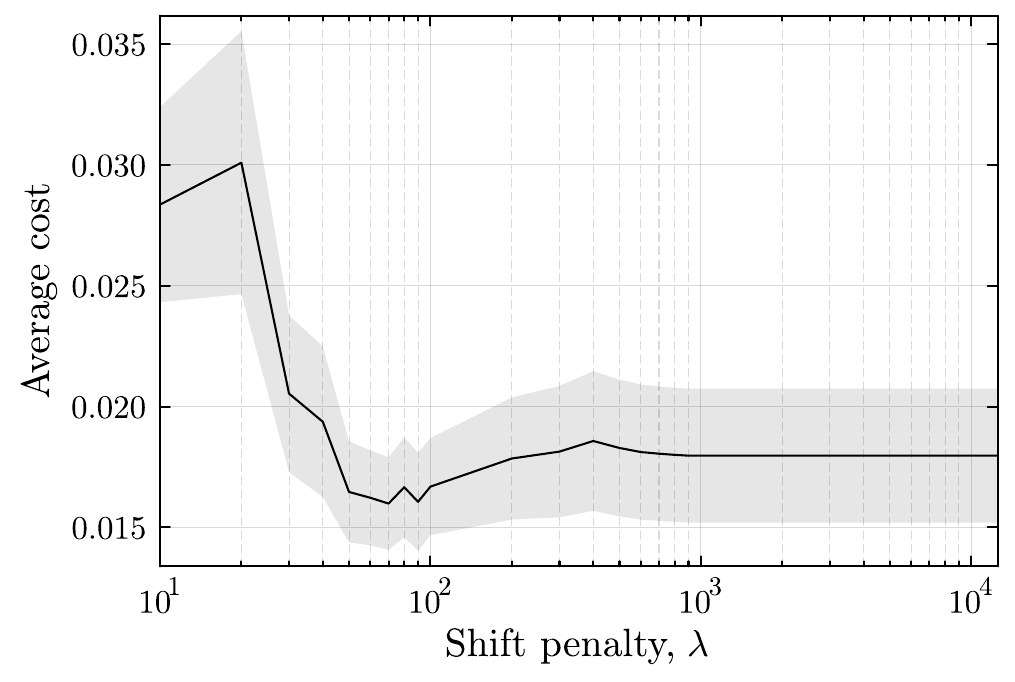}
    \caption{\textbf{Price-Forecasting Parameter Tuning for WPF with $\bm{L_1}$ Metric.} Average cost within the two-year parameter-tuning window. Band presents standard-error range.}
    \label{figure:dairy-prices-WPF-parameter-costs}
\end{figure}

Figure~\ref{figure:dairy-prices-WPF-distribution} graphs the probabilities assigned to the sequence of historical observations from Figure~\ref{figure:dairy-prices-data} by WPF when price forecasting. A careful comparison of Figure~\ref{figure:dairy-prices-WPF-distribution} to Figure~\ref{figure:dairy-prices-data} shows that the WPF method generally only assigns probabilities to 
observations in which the prices of the two fat products are similar and the prices of each milk-powder product are similar, as is the case for the most recent price~dynamics.

\begin{figure}[H]
    \centering
    \hspace*{-1cm}\includegraphics[width=329pt]{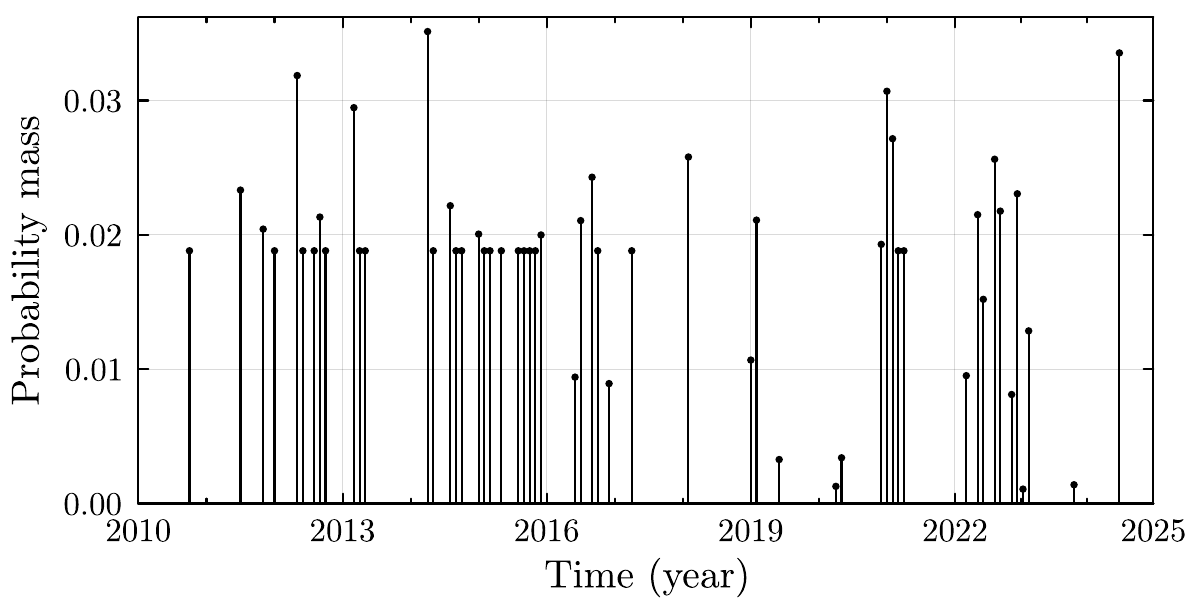}
    \caption{\textbf{Price-Forecasting Probabilities for WPF with $\bm{L_1}$ Metric.} Shift penalty set at the minimum from Figure~\ref{figure:dairy-prices-WPF-parameter-costs}.}
    \label{figure:dairy-prices-WPF-distribution}
\end{figure}

\subsection{Portfolio Optimization}
Figure~\ref{figure:stock-returns-data} graphs ten years of stock values for Boeing Co.\ (BA), Berkshire Hathaway Class~B~(BRK.B), Goldman Sachs Group Inc.\ (GS), Johnson \& Johnson (JNJ), JPMorgan Chase~\&~Co.~(JPM), Coca-Cola Co.\ (KO), McDonald’s Corp.\ (MCD), Pfizer Inc.\ (PFE), Walmart Inc.\ (WMT), and Exxon Mobil Corp.\ (XOM).
\begin{figure}[H]
    \centerline{
    \hspace*{1.8cm}\includegraphics[width=399pt]{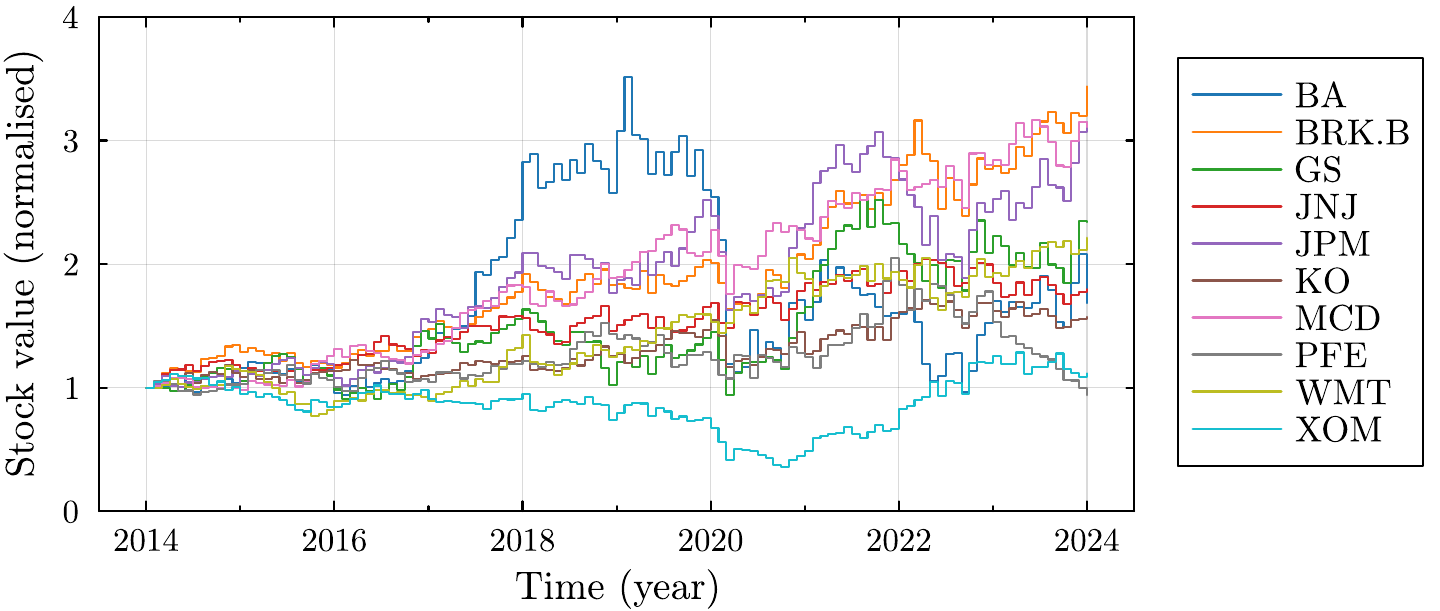}}
    \caption{\textbf{Historical Stock Values.} Monthly observations; Jan.\ 2014--Dec.\ 2024.\protect\footnotemark}
    \label{figure:stock-returns-data}
\end{figure}
\footnotetext{Data retrieved using the \texttt{STOCKHISTORY} function in \texttt{Microsoft Excel}.}

\noindent Figure~\ref{figure:stock-returns-data} shows that the returns of each stock are correlated, with market shocks affecting most stocks simultaneously. In particular, there are significant shocks in early 2020 and in mid-2022.

We consider a risk-averse portfolio-optimization problem. For $m$ stocks with portfolio weights~$x\in\reals^m_+$ and random returns~$\rxi\in\reals^m$, the risk-neutral portfolio return is the random variable~$x^{\transpose}\rxi$. Adjusting for the conditional value at risk of the portfolio, the problem is
\begin{equation*}
    \minimize_{x\in\reals_+^m} ~~ (1-\rho) \Expt\bigl[-x^{\transpose}\rxi \bigr] + \rho\,\mathrm{CVaR}_{\beta}\bigl[-x^{\transpose}\rxi\bigr] ~~ \subjectTo ~~ \sum_{i=1}^m x_i=1.
\end{equation*}
Here $\mathrm{CVaR}_{\beta}$ is the conditional-value-at-risk operator at percentile $\beta\in[0,1)$, and $\rho\in[0,1]$ parametrises the level of risk aversion. 
(Note that $\beta$ is associated with the worst $(1-\beta)\,\%$ of returns, so $\beta=0$ recovers the risk-neutral case.) For $T$ historical returns $\xi_1,\ldots,\xi_T\in\reals^m$ assigned probabilities $p_1,\ldots,p_T$, using the Rockafellar--Uryasev formula gives
\begin{alignat*}{2}
    & \minimize_{x\in\reals_+^m,\tau\in\reals} && \quad (1-\rho) \sum_{t=1}^T p_t\Bigl(-x^{\transpose}\xi_t\Bigr) + \rho \sum_{t=1}^T p_t\Bigl(\tau + \tfrac{1}{1-\beta}\max\bigl\{-x^{\transpose}\xi_t -\tau,0\bigr\}\Bigr)\\ 
    & \text{subject to} && \quad \sum_{i=1}^m x_i=1. \notag
\end{alignat*}
This can further be expressed as a linear program via an epigraphical reformulation.

In our evaluations we set $\rho = 0.9$ and $\beta=95\,\%$. Table~\ref{table:portfolio-parameter-ranges} presents the parameter ranges used when tuning, and Table~\ref{table:stock-returns-test} presents the  performance of different estimation methods.
\begin{table}[H]
\centering
\caption{\textbf{Portfolio-Optimization Parameter Ranges}}
\label{table:portfolio-parameter-ranges}
\centerline{\begin{tabular}{r c}
\toprule
\midrule
\addlinespace
    Window size & $s \in \lceil\logRange(1,10\cdot 12;30)\rceil$\\
    Decay rate & $\alpha \in \{0\}\cup\logRange(10^{-4},10^0;30)$ \\
    Shift penalty & $\lambda \in \{0\}\cup\bigcup_{i=0}^2 \linRange(10^i,10^{i+1};10)\cup\{\infty\}$\\
\addlinespace
\bottomrule
\end{tabular}}
\end{table}

\begin{table}[H]
\centering
\caption{\textbf{Portfolio-Optimization Performance.} Standard errors ($\pm$ terms) reported in `Difference from SAA' row computed using common random numbers.}
\label{table:stock-returns-test}
\centerline{\begin{tabular}{r cccccc}
\toprule
\midrule        
      & \multirow{2.25}{*}{\hphantom{+}SAA} & \multirow{2.25}{*}{\hphantom{+}Windowing} & \multirow{2.25}{*}{\hphantom{+}Smoothing} & \multicolumn{3}{c}{WPF with metric} \\\cmidrule(lr){5-7}
      & & & & $\hphantom{+}L_1$ & $\hphantom{+}L_2$ & $\hphantom{+}L_\infty$ \\
\midrule
\addlinespace
\makecell[r]{Risk-adjusted\\average testing cost}
        & $\hphantom{+}0.0707$
        & $\hphantom{+}0.0930$
        & $\hphantom{+}0.0896$
        & $\hphantom{+}0.0630$
        & $\hphantom{+}0.0772$
        & $\hphantom{+}0.0644$ \\
\addlinespace
\midrule
\addlinespace
\makecell[r]{Difference\\from SAA (\%)}
        & $ $
        & $\hphantom{+}31.6\pm 9.4$
        & $\hphantom{+}26.8\pm 8.0$
        & $-10.8\pm 5.8$
        & $\hphantom{+}9.2\pm 5.3$
        & $-8.8\pm 5.6$ \\
\addlinespace
\bottomrule
\end{tabular}}
\end{table}
\noindent Table~\ref{table:stock-returns-test} shows that the WPF method with the $L_1$ metric provides $\approx 11\,\%$ better returns than SAA, while windowing and smoothing both perform worse than SAA. 

Figure~\ref{figure:stock-returns-WPF-parameter-costs} graphs average costs of the WPF method for different shift penalties. In contrast to Figure~\ref{figure:dairy-prices-WPF-parameter-costs}, there are two decreases in costs for intermediate penalties. 
\begin{figure}[H]
    \centering
    \hspace{-1.3cm}\includegraphics[width=281pt]{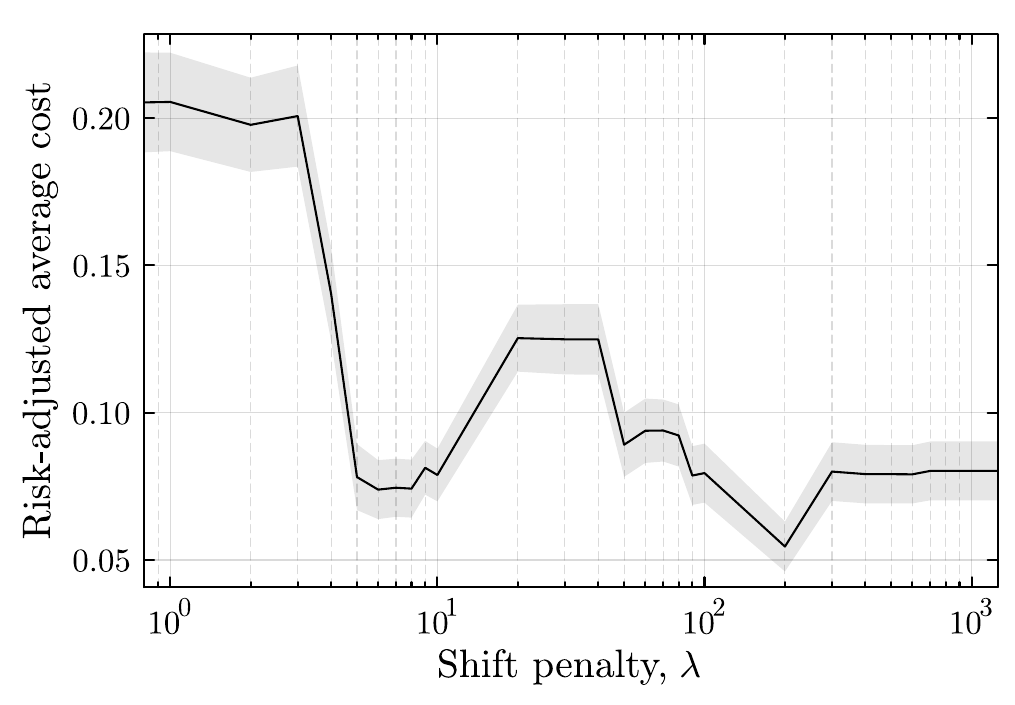}
    \caption{\textbf{Portfolio-optimization Parameter Tuning for WPF with $\bm{L_1}$ Metric.} Risk-adjusted average cost within the two-year parameter-tuning window. Band presents standard-error range.}
    \label{figure:stock-returns-WPF-parameter-costs}
\end{figure}


Figure~\ref{figure:stock-returns-WPF-distribution} graphs the probabilities assigned to the sequence of 
observations from Figure~\ref{figure:stock-returns-data} by WPF when optimizing the portfolio. In comparison to Figure~\ref{figure:dairy-prices-WPF-distribution}, Figure~\ref{figure:stock-returns-WPF-distribution} features more observations with the same probabilities. This is evidence of a larger number of singleton components in the optimal network-flow. 
\begin{figure}[H]
    \centering
    \hspace*{-1.4cm}\includegraphics[width=329pt]{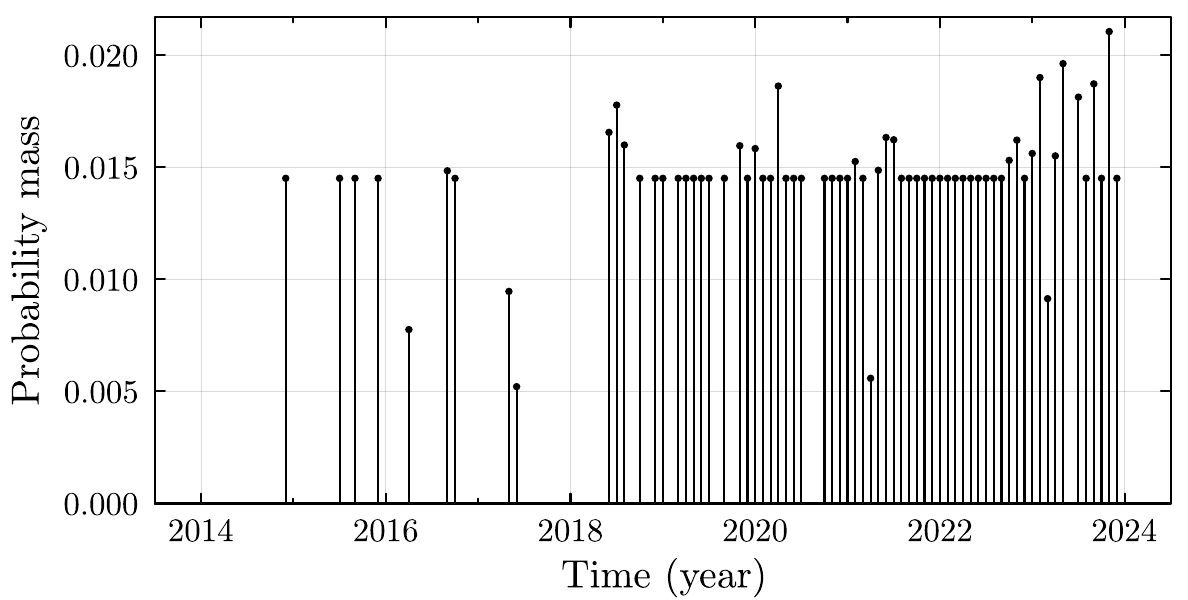}
    \caption{\textbf{Portfolio-Optimization Probabilities for WPF with $\bm{L_1}$ Metric.} Shift penalty set at the minimum from Figure~\ref{figure:stock-returns-WPF-parameter-costs}.}
    \label{figure:stock-returns-WPF-distribution}
\end{figure}



\section{Discussion}\label{section:conclusion}
We have shown how  the maximum-likelihood formulation with Wasserstein-distance regularization gives a problem that can be reformulated to have a simple network-flow structure. Using synthetic data we show how this method performs well in cases where the underlying probability structures involve greater complexity -- in our experiments we consider multimodal distributions in which the modes move independently. In two examples with nonstationary, real-world data, we demonstrate how effective this approach can be in practice. This is achieved with the $L_1$ metric, which here performs better than the Euclidean metric. From a maximum-likelihood perspective, if the probability of a change in the distribution across multiple dimensions is simply the product of the probabilities of the change across each individual dimension, then after taking logarithms we get a framework in which the $L_1$ metric appears naturally. Even though in our two example data sets we do not expect the required dimension independence to hold, the results suggest that the $L_1$ metric may be a better choice than alternatives.

The flexibility of the WPF method with regard to different distance metrics is an advantage. 
We have already seen that the $L_1$ metric {can be} much more effective than the Euclidean metric when dealing with multidimensional data. Another option is to adjust an existing metric to penalise any movement -- thus we introduce a new parameter 
$\delta > 0$ and define the adjusted distance by
$$
d^{\prime}(\xi,\zeta) \defeq \begin{cases}
d(\xi,\zeta)+\delta & \text{if}~\xi \neq \zeta,\\
0 & \text{if}~\xi = \zeta.
\end{cases}
$$
As long as $d$ is a lower-semicontinuous metric, it can be seen that $d^{\prime}$ is a lower-semicontinuous metric as well. This change introduces a sort of `stickiness', with more of the distribution being left unchanged from one period to the next. We found that this can produce marginal improvements in our numerical examples.

The WPF method results in an assignment of nonzero probabilities to a subset of the previous observations as an estimate of the final probability distribution. Depending on the value of the shift penalty, the number of observations included in this set may be quite small. The WPF method thus provides a suitable methodology for scenario reduction in the presence of nonstationarity. Existing approaches to scenario reduction focus on selecting a subset of data points that give a good representation of the overall distribution, where a Wasserstein distance can be used to measure the fit \parencite{rujeerapaiboon2022scenario}. The need to make a small selection of scenarios from the complete historical record is driven by computational necessity in areas like energy planning \parencite{park2019comparing}.

The model we use has a single penalty parameter $\lambda$, corresponding to a situation in which the likelihood of changes in the underlying distribution is constant over time. So although we have nonstationarity, the characteristics of  this nonstationarity do not vary over time. For example, our model could be applied in a case where the underlying distribution in each time period is chosen from a finite set of possible distributions and movement between these candidate distributions follows a Markov chain. The Wasserstein penalty would then be appropriate if the probability of moving from one state to another depends on the Wasserstein distance between the corresponding distributions. Then our assumption of a single value for $\lambda$ corresponds to a fixed transition matrix. In our empirical examples we see variations in $\lambda$ over the period of the time series, which suggests that \emph{higher-order} nonstationarity -- where the dynamics of the nonstationarity vary over time -- may be important. We have not attempted a systematic exploration of this, but it seems that explicitly discarding older observations could be  effective in these circumstances. 

WPF solutions exhibit certain behaviours that generally hold, but may break down in specific cases. 
For example, it is generally true that larger values of $\lambda$ lead to more of the observations appearing in the final distribution with nonzero probabilities. At the extreme, all of the observations appear as $\lambda \to \infty$. However, in some situations we can find observations dropping out when $\lambda$ is increased.  (An example occurs with observations $\{6.41, 6.4, 5.89, 5.69, 5.13, 4.5695 \}$: as $\lambda$ increases from $2.7$ to $3$,  the fifth observation, $5.13$, drops out.)

The WPF method estimates a distribution in every time period. Except for the first and last time period, there will generally be many different distributions that can be chosen. However, as a consequence of the argument in the proof of Lemma \ref{lemma:discrete-support} we know that $\mathbb{P}_t(\{\widehat{\xi}_t\})$, the probability on the observation $\widehat{\xi}_t$ at time $t$, is fixed at optimality. Now consider what happens to this as the total number of observations $T$ increases. As new observations are added, there may be some adjustment to $\mathbb{P}_t(\{\widehat{\xi}_t\})$, but for $T$ sufficiently large this adjustment will be  negligible or nonexistent. The behaviour of the optimal solution around time period $t$ should not be influenced by observations far in the future, and thus $\mathbb{P}_t(\{\widehat{\xi}_t\})$ for fixed $t$ may approach a limit as $T \to \infty$. This is a type of stability result which would be desirable. However it may fail to hold in specific cases. We give an example of such a failure in Appendix~\ref{appendix:example-instability}. (Note that this instability only occurs in very specific cases that have zero probability in practice.)

Finally we note that there is a possible extension of the WPF method to include a penalty for changes in distribution over the recent past, rather than just the current distribution. For example we could include a distance over two time steps in the objective function of WPF so that we get:
$$
\maximize_{\mathbb{P}_1,\ldots,\mathbb{P}_T} \quad \sum_{t=1}^{T} \log\bigl(\mathbb{P}_t(\{\widehat{\xi}_t\})\bigr) - \lambda_1  \sum_{t=1}^{T-1} W(\mathbb{P}_t, \mathbb{P}_{t+1})- \lambda_2 \sum_{t=1}^{T-2} W(\mathbb{P}_t, \mathbb{P}_{t+2}).
$$
However, this problem is much harder to solve and the key property of being able to restrict attention to distributions supported only on the observations will fail in this case. An example where this occurs is when there are four observations in $\reals^2$:   $\widehat{\xi}_1=(4,1)$, $\widehat{\xi}_2=(10,2)$, $\widehat{\xi}_3=(2,2)$, $\widehat{\xi}_4=(4,3)$, and we use Euclidean distances. Then if $\lambda_1 = \lambda_2 = 0.5$ we find that an optimal solution has positive probability in period 2 on the point $(3.42265, 2)$, which is the geometric median of the points $\widehat{\xi}_1$, $\widehat{\xi}_3$ and $\widehat{\xi}_4$. 

\section*{Acknowledgements}
The authors would like to thank Oscar Dowson for the GDT price data. The authors would also like to thank Andrew Philpott and Wolfram Wiesemann who have contributed helpful comments on this work. The second author was supported in part by the Claude McCarthy Fellowship.

\begin{appendices}
\section*{\textbf{\LARGE{Appendix}}}

\vspace{-0.2em}
\section{Example without Stability}
\label{appendix:example-instability}

Consider the following example. We take $\lambda=4$. The sequence of $\widehat{\xi}_t$ values are 
\[0,0,0,0,0,1,-1,-1,\underline{-1},1,1,1,-1,1,1,\underline{1},-1,-1,-1,1, \ldots ,-1,-1,\underline{-1},1,1,1,-1, \ldots .\]
This is defined as five zeros followed by a sequence of $1$ and $-1$ values split into alternating groups: within each group there is a repeated pattern, either three $-1$ values and then one $1$ value, or the reverse. It turns out that this pattern provides a sufficient local mixture of $1$s and $-1$s such that it is never worthwhile to include a path from $1$ to $-1$ or vice versa. The groups are of increasing length with a switch to the opposite group when the total number of observations of $-1$ are 50\,\% more than the number of observations of $1$, or vice versa. The first three times this switch occurs are at $t=9$, $t=16$ and $t=31$ (shown underlined in the list above). But it is clear that the reversals will happen infinitely often. Our aim is to show that as each observation is added to the sequence the optimal solution will be unstable, with no limiting value for the probability assigned to the first nonzero observation, $\widehat{\xi}_6$. 

We first establish that in a path decomposition of the optimal solution there is no value of $T>20$ for which a path with an arc from $1$ to $-1$ or vice versa can occur with nonzero mass. This arc is most likely to occur right at the end. Suppose that the final four observations are $\widehat{\xi}_{T-3}=1$, $\widehat{\xi}_{T-2}=\widehat{\xi}_{T-1}=\widehat{\xi}_{T}=-1$ and that the value of $\mathbb{P}_{T-3}(\{1\})=f$, with all of the other mass located at $-1$ (there will be none at $0$). The condition  needed to include a new path which has $x(T-2,T-1)>0$ is that the derivative along this new path is greater than $0$ since the alternative is to have $x(T-2,T+1)=f$. In other words, we require that $3/(1-f)-2\lambda>0$, i.e., $f>0.625$.  We will show for $T>20$ that both $f$ and $1-f$ are less than $0.625$ and hence that there is no path of this form. 

We may thus consider solutions which make use of just 4 paths (with probabilities $f_1,f_2,f_3,f_4$). $\cP_1$ goes from the source to the first observation at $1$, and then moves between successive observations at $1$. $\cP_2$ goes from the source to  $\widehat{\xi}_1$ at $0$ then stays at $0$ for the first five observations until it moves to the first observation at $1$, before following $\cP_1$. $\cP_3$ and $\cP_4$ match $\cP_1$ and $\cP_2$ but use observations at $-1$ rather than $1$. If there are a total of $n_1$ observations at $1$ and $n_{-1}$ observations at $-1$, then we can show that an optimal solution has $f_1=f_3=0$, $f_2=n_1/(n_1+n_{-1})$ and $f_3=n_{-1}/(n_1+n_{-1})$. This uses Lemma \ref{lemma:path-derivative}. We take $\pathConstant=5+n_1+n_{-1}-\lambda$ and note that (\ref{equation:path-constant}) holds with equality for paths $\cP_2$ and $\cP_3$, and that the inequality holds for $\cP_1$ and $\cP_4$ since $\lambda<5$. Now observe by construction we have $f_2$ varying between $1.5 n_{-1}/(2.5n_{-1})=0.6$ and $n_1/(2.5n_1)=0.4$. These are approximate expressions -- we switch between groups as soon as the ratio exceeds $3/2$, but the accuracy increases as $T$ gets larger. So this all works for sufficiently large $T$. (For smaller values of $T$ we may not meet the requirement for both $f_2$ and $f_3=1-f_2$ to be less than $0.625$.) With this example we have $f_2=0.25$ at $T=9$ and $f_2=0.636$ at $T=16$, but at $T=31$ we get $f_2=0.615$ which meets our requirement. 

We have therefore established that for sufficiently large $T$ the probability $\mathbb{P}_6(\{\widehat{\xi}_6\})$ will oscillate between approximately $0.4$ and $0.6$, and have no limiting value as successive observations are made~and~$T \rightarrow \infty$. 

\end{appendices}

\printbibliography

@book{seber-lee:Linear-Regression-Analysis,
  author    = {George A. F. Seber and Alan J. Lee},
  title     = {Linear Regression Analysis},
  publisher = {Wiley},
  year      = {2003},
  edition   = {2},
}

@article{tashman-2000-out-of-sample-forecasting,
title = {Out-of-Sample Tests of Forecasting Accuracy: {An} Analysis and Review},
journal = {International Journal of Forecasting},
volume = {16},
number = {4},
pages = {437--450},
year = {2000},
author = {Leonard J. Tashman},
}

@book{Optimal-Transport:Villani,
  title={Optimal Transport: {Old} and New},
  author={Villani, C{\'e}dric},
  edition={1},
  year={2008},
  publisher={Springer},
}

@article{yue2022linear,
  title={On Linear Optimization Over {Wasserstein} Balls},
  author={Yue, Minchao C. and Kuhn, Daniel and Wiesemann, Wolfram},
  journal={Mathematical Programming},
  volume={195},
  pages={1107--1122},
  year={2022},
}

@article{pinelis-2016-extreme-points-of-moment-sets,
  title={On the Extreme Points of Moments Sets},
  author={Pinelis, Iosif},
  journal={Mathematical Methods of Operations Research},
  volume={83},
  number={3},
  pages={325--349},
  year={2016},
  publisher={Springer}
}

@article{humpherys2012fresh,
  title={A Fresh Look at the {Kalman} Filter},
  author={Humpherys, Jeffrey and Redd, Preston and West, Jeremy},
  journal={SIAM review},
  volume={54},
  number={4},
  pages={801--823},
  year={2012},
  publisher={SIAM}
}

@article{anderson2020can,
  title={When can we Improve on Sample Average Approximation for Stochastic Optimization?},
  author={Anderson, Edward and Nguyen, Harrison},
  journal={Operations Research Letters},
  volume={48},
  number={5},
  pages={566--572},
  year={2020},
  publisher={Elsevier}
}

@article{Julia,
  title={{Julia}: {A} Fresh Approach to Numerical Computing},
  author={Bezanson, Jeff and Edelman, Alan and Karpinski, Stefan and Shah, Viral B.},
  journal={SIAM Review},
  volume={59},
  number={1},
  pages={65--98},
  year={2017},
}

@article{JuMP,
    author = {Miles Lubin and Oscar Dowson and Joaquim {Dias Garcia} and Joey Huchette and Beno{\^i}t Legat and Juan Pablo Vielma},
    title = {{JuMP} $1.0$: {Recent} Improvements to a Modeling Language for Mathematical Optimization},
    journal = {Mathematical Programming Computation},
    year = {2023},
    pages = {581--589},
    volume = {15}
}

@article{mathoptinterface,
    title={{MathOptInterface}: {A} Data Structure for Mathematical Optimization Problems},
    author={Legat, Beno{\^i}t and Dowson, Oscar and Garcia, Joaquim Dias and Lubin, Miles},
    journal={INFORMS Journal on Computing},
    year={2021},
    volume={34},
    number={2},
    pages={672--689},
}

@misc{COPT,
      title={Cardinal Optimizer {(COPT)} User Guide}, 
      author={Dongdong Ge and Qi Huangfu and Zizhuo Wang and Jian Wu and Yinyu Ye},
      year={2024},
}

@book{ahuja1993network,
  title={Network Flows: {Theory}, Algorithms, and Applications},
  author={Ahuja, Ravindra K. and Magnanti, Thomas L. and Orlin, James B.},
  year={1993},
edition={1},
  publisher={Prentice Hall},
}

@article{chen2021data,
  title={Data-driven Inventory Control with Shifting Demand},
  author={Chen, Boxiao},
  journal={Production and Operations Management},
  volume={30},
  number={5},
  pages={1365--1385},
  year={2021},
}

@article{an2025nonstationary,
  title={The Nonstationary Newsvendor with (and without) Predictions},
  author={An, Lin and Li, Andrew A. and Moseley, Benjamin and Ravi, R.},
  journal={Manufacturing \& Service Operations Management},
  year={2025},
  publisher={INFORMS},
    volume={27},
number={3},
pages={881--896},
}

@unpublished{keskin2023nonstationary,
  title={The Nonstationary Newsvendor: {Data}-Driven Nonparametric Learning},
  author={Keskin, N. Bora and Min, Xu and Song, Jing-Sheng Jeannette},
  note={Preprint},
  year={2025}
}

@unpublished{huang2023stability,
  title={A Stability Principle for Learning Under Non-Stationarity},
  author={Huang, Chengpiao and Wang, Kaizheng},
  note={Preprint},
  year={2025}
}

@article{keskin2017chasing,
  title={Chasing Demand: {Learning} and Earning in a Changing Environment},
  author={Keskin, N. Bora and Zeevi, Assaf},
  journal={Mathematics of Operations Research},
  volume={42},
  number={2},
  pages={277--307},
  year={2017},
  publisher={INFORMS}
}

@article{cheung2023nonstationary,
  title={Nonstationary Reinforcement Learning: {The} Blessing of (More) Optimism},
  author={Cheung, Wang Chi and Simchi-Levi, David and Zhu, Ruihao},
  journal={Management Science},
  volume={69},
  number={10},
  pages={5722--5739},
  year={2023},
  publisher={INFORMS}
}

@article{treharne2002adaptive,
  title={Adaptive Inventory Control for Nonstationary Demand and Partial Information},
  author={Treharne, James T. and Sox, Charles R.},
  journal={Management Science},
  volume={48},
  number={5},
  pages={607--624},
  year={2002},
  publisher={INFORMS}
}

@article{rujeerapaiboon2022scenario,
  title={Scenario Reduction Revisited: {Fundamental} Limits and Guarantees},
  author={Rujeerapaiboon, Napat and Schindler, Kilian and Kuhn, Daniel and Wiesemann, Wolfram},
  journal={Mathematical Programming},
  volume={191},
  pages={207--242},
  year={2022},
  publisher={Springer}
}

@article{park2019comparing,
  title={Comparing Scenario Reduction Methods for Stochastic Transmission Planning},
  author={Park, SangWoo and Xu, Qingyu and Hobbs, Benjamin F.},
  journal={IET Generation, Transmission \& Distribution},
  volume={13},
  number={7},
  pages={1005--1013},
  year={2019},
  publisher={Wiley}
}

@article{bollerslev1992arch,
  title={{ARCH} Modeling in Finance: {A} Review of the Theory and Empirical Evidence},
  author={Bollerslev, Tim and Chou, Ray Y. and Kroner, Kenneth F.},
  journal={Journal of Econometrics},
  volume={52},
  number={1-2},
  pages={5--59},
  year={1992},
  publisher={Elsevier}
}

@misc{dairy-forecasting,
  author = {{Dowson Farms}},
  title = {Forecasting the 2024/25 {Fonterra} Milk Price},
  year = {2025},
howpublished = {Website.\\Link: \href{https://dairyanalytics.co.nz/}{\texttt{dairyanalytics.co.nz}}},
}

@article{boudoukh1998best,
  title={The best of both worlds},
  author={Boudoukh, Jacob and Richardson, Matthew and Whitelaw, Robert},
  journal={Risk},
  volume={11},
  number={5},
  pages={64--67},
  year={1998}
}

@article{amrani2017estimation,
  title={Estimation of quantiles of non-stationary demand distributions},
  author={Amrani, Hadar and Khmelnitsky, Eugene},
  journal={IISE Transactions},
  volume={49},
  number={4},
  pages={381--394},
  year={2017},
  publisher={Taylor \& Francis}
}

@unpublished{keehan2025dontlookangerwasserstein,
      title={Don't Look Back in Anger: {Wasserstein} Distributionally Robust Optimization with Nonstationary Data}, 
      author={Dominic S. T. Keehan and Edward J. Anderson and Wolfram Wiesemann},
      year={2025},
        note = {Preprint}, 
}

@article{chatfield2001new,
  title={A new look at models for exponential smoothing},
  author={Chatfield, Chris and Koehler, Anne B. and Ord, J. Keith and Snyder, Ralph D.},
  journal={Journal of the Royal Statistical Society. Series D (The Statistician)},
  volume={50},
  number={2},
  pages={147--159},
  year={2001},
}

\end{document}